\definecolor{labelkey}{rgb}{0,0.08,0.45}
\definecolor{refkey}{rgb}{0,0.6,0.0}
\definecolor{myblue}{rgb}{.9, .9, 1}
\newtheorem{theorem}{Theorem}[section]
\newtheorem{lemma}[theorem]{Lemma}
\newtheorem{corollary}[theorem]{Corollary}
\newtheorem{proposition}[theorem]{Proposition}
\newtheorem{definition}[theorem]{Definition}
\theoremstyle{plain}{\theorembodyfont{\rmfamily}

\theoremstyle{plain}{\theorembodyfont{\rmfamily}
}
\theoremstyle{plain}{\theorembodyfont{\rmfamily}
}
\theoremstyle{plain}{\theorembodyfont{\rmfamily}
}
\theoremstyle{plain}{\theorembodyfont{\rmfamily}
\newtheorem{example}[theorem]{Example}}

\theoremstyle{plain}{\theorembodyfont{\rmfamily}
\newtheorem{remark}[theorem]{Remark}}

\theoremstyle{plain}{\theorembodyfont{\rmfamily}

\newenvironment{Calg}[1]
  {\customC}
  {\endcustomC}

\theoremstyle{plain}{\theorembodyfont{\rmfamily}

\theoremstyle{plain}{\theorembodyfont{\rmfamily}

\def\proof{\noindent{\it Proof}. \ignorespaces}
\def\endproof{\ensuremath{\hfill \quad \blacksquare}}


\newcommand{\dom}{\ensuremath{\operatorname{dom}}}

\def\RR{{\mathbb{R}}}

\def\NN{{\mathbb{N}}}

\newcommand{\la}{\langle}
\newcommand{\ra}{\rangle}

\newcommand{\nexto}{\kern -0.54em}

\newcommand{\dZ}{{\cal Z \kern -0.7em Z}}
\newcommand{\dC}{{\rm\hbox{C \kern-0.8em\raise0.2ex\hbox{\vrule height5.4pt width0.7pt}}}}
\newcommand{\dQ}{{\rm\hbox{Q \kern-0.85em\raise0.25ex\hbox{\vrule height5.4pt width0.7pt}}}}

\newenvironment{retraitsimple}{\begin{list}{--~}{
 \topsep=0.3ex \itemsep=0.3ex \labelsep=0em \parsep=0em
 \listparindent=1em \itemindent=0em
 \settowidth{\labelwidth}{--~} \leftmargin=\labelwidth
}}{\end{list}}

\begin{document}
\title{Two algorithms for solving systems of inclusion problems}
\author{R. D\'iaz Mill\'an\footnote{ Federal Institute of Goi\'as, Goi\^ania, Brazil, e-mail: rdiazmillan@gmail.com}}
\maketitle
\begin{abstract}
The goal of this paper is to present two algorithms for solving systems of inclusion problems, with all components of the systems being a sum of two maximal monotone operators.
The algorithms are variants of the forward-backward splitting method and one being a hybrid with the alternating projection method. They consist of approximating the
 solution sets involved in the problem by separating halfspaces which is a well-studied strategy. The schemes contain two parts, the first one is an explicit Armijo-type search
 in the spirit of the extragradient-like methods for variational inequalities. The second part is the projection step, this being the main difference between the algorithms.
 While the first algorithm computes the projection onto the intersection of the separating halfspaces, the second chooses one component of the system and projects onto
 the separating halfspace of this case. In the iterative process, the forward-backward operator is computed once per inclusion problem, representing a relevant computational
saving if compared with similar algorithms in the literature. The convergence analysis of the proposed methods is given assuming monotonicity of all operators, without Lipschitz
 continuity assumption. We also present some numerical experiments.

\noindent{\bf Keywords:} Armijo-type search,
Maximal monotone operators, Forward-Backward, Alternating projection, Systems of inclusion problems

\noindent{\bf Mathematical Subject Classification (2010):} 93B40, 65K15, 68W25, 47H05, 49J40 .
\end{abstract}
\section{Introduction}
 Given a finite family of pairs of operators $\{ A_i,B_i \}_{i \in \mathbb{I}}$, with $\mathbb{I}=:\{1,2,\cdots, m\}$ and $m\in \NN$, the system of inclusion problems consists in:
\begin{equation}\label{problema}
\mbox{find} \ \ x^*\in \RR^n  \ \  \mbox{such that} \ \  0\in A_i(x^*)+B_i(x^*) \ \ \mbox{for all} \ \ i\in \mathbb{I},
\end{equation}
where the operators $A_i:\dom(A_i)\subset \RR^n \rightarrow \RR^n$ are point-to-point and maximal monotone and the operator $B_i:\dom(B_i)\subset \RR^n\rightarrow 2^{\RR^n}$
is point-to-set maximal monotone, for all $i\in\mathbb{I}$. The solution set of the problem, denoted by $S_*$, is given by the intersection of the solution sets of all components of the system
, i.e., $S_*=\cap_{i\in \mathbb{I}} S^i_*$, where $S^i_*$ is defined as $S^i_*:=\{x\in \RR^n: 0\in A_i(x)+B_i(x)\}$.

 Many problems in mathematics can be modeled as problem \eqref{problema}, for example, taking the operators $B_i = N_{C_i}$ (the normal cone
 of $C_i$) with $C_i \subseteq \RR^n$ nonempty, closed and convex set for all $i\in \mathbb{I}$, we obtain a system of variational inequalities, introduced by I.V. Konnov in \cite{konnov1}, which
have been deeply studied, see \cite{konnov, eslam, gibali, gibali2, gibali3, konnov1,van, alm}.

  For solving inclusion problems for the sum of two operators, the hypothesis of Lipschitz continuity and the forward-backward method has been used regularly, see \cite{tseng, ranf, silvia, silvia2}. Due to its extensive field of applications, it is crucial to consider general versions of problem \eqref{problema} which relax the Lipschitz continuity hypothesis. That is the reason we are interested in assuming only maximal monotonicity for all operators involved, without the Lipschitz continuity assumption.

 The proposed algorithms contain two main steps, a line-search for finding a separating hyperplane, and a projection onto separating hyperplanes. The line-search compute the operator forward-backward once per each component of the system at each iteration, which represents a relevant computational saving in comparison with the line-search proposed by Tseng in \cite{tseng}. The second part consists in projecting the current point onto a suitable set. This part is the main difference between our algorithms.
 In the first algorithm, we project onto the intersection of the separating hyperplanes. While the second algorithm chooses one component of the system, find the separating hyperplane and project onto it. This second method is a hybrid with the alternating projection method.

 In the first algorithm, we calculate in parallel, the hyperplane separating the current point and the solution set of each component of the system, and project onto the intersection
 of all of them. In the second one, we only use one component of the system in each step of the algorithm,
in the spirits of the alternating projection method. The present work follows the ideas presented in \cite{rei-yun, phdthesis, borw-baus}.

  The number of hyperplanes that must be intersected for the first algorithm is at most equal to the number of components of the system, contrary to the algorithms proposed in \cite{van}, in which the number of hyperplanes increases at each iteration. This makes our scheme be computationally lightest. The numerical experiments (see Section \ref{examples}) shows that the search of the separating hyperplanes is more efficient (computationally) than compute the projection onto the intersection of many hyperplanes.

The problem \eqref{problema} has many applications in operations research, optimal control, mathematical physics, optimization and differential equations.
This kind of problem has been receiving an increasing academic attention in the recent years. It's is due to a fact that many nonlinear problems arise within applied areas, are mathematically modeled as
 nonlinear operator system of equations and/or inclusions, we can refer to  \cite{gibali, van, alm,gibali2, gibali3, semenov, eslam}.

The present work is organized as follow. The next section contains some notations and preliminary results useful for the remainder of this paper. The variants of the forward-backward
 splitting method we present in section \ref{section3}. In section \ref{section4} the convergence analysis of both algorithms is proved. Section \ref{examples} is dedicated to showing
some numerical experiments and comparison with a similar method in the literature and between our algorithms. Finally, we provide some conclusions.

\section{Preliminaries}
 In this section, we review some basic definitions and results. First, we introduce the notation and recall some definitions. The inner product in
$\RR^n$ is denoted by $\la \cdot , \cdot \ra$ and its induced norm by $\|\cdot\|$. We denote by $2^{C}$ the power set of the set $C$, and by $B[0,R]$ the closed ball
 centered in $0$ and radius $R$. Given a nonempty, convex and
closed subset $X$ of $\RR^n$, we denote by $P_X(x)$, the orthogonal projection of $x$
onto $X$. It's defined as the unique point in $X$, such that $\|P_X(x)-x\| \le \|y-x\|$ for all $y\in X$. By $N_X(x)$
we denote the normal cone of $X$ in $x\in X$, defined as
$N_X(x):=\{d\in \RR^n\, : \, \la d,x-y\ra\ge 0 \;\; \forall
y\in X\}$. The {\it domain} of $T$ is defined by, $\dom(T):=\{x\in \RR^n: T(x)\neq \emptyset\}$. The operator $T:\dom(T)\subset \RR^n \rightarrow 2^{\RR^n}$
is said to be monotone if, for all $(x,u),(y,v)$ in the {\it graph} of $T$, ($Gr(T):=\{(x,u)\in
\RR^n\times \RR^n : u\in T(x)\}$), we have $\la x-y, u-v \ra \ge 0,$
and it is maximal if $T$ has no proper monotone extension in the
graph inclusion sense.

We start with the well-known definition of the so-called Fej\'er convergence also know as Fej\'er monotonicity.

\begin{definition}
Let $S$ be a nonempty subset of $\RR^n$. The sequence $(x^k)_{k\in \NN}\subset \RR^n$ is said to be Fej\'er convergent to $S$, if and only if, for all $x\in S$ there exists $k_0\ge 0$, such that $\|x^{k+1}-x\| \le \|x^k - x\|$ for all $k\ge k_0$.
\end{definition}

This definition was introduced in \cite{browder} and have been
further elaborated in \cite{IST,combe} and \cite{borw-baus} and references therein. A useful result on Fej\'er sequences is the following.

\begin{proposition}\label{punto}
If the sequence $(x^k)_{k\in \NN}$ is Fej\'er convergent to $S\neq \emptyset$, then:
\begin{enumerate}
\item  $(x^k)_{k\in \NN}$ is bounded;
\item  $(\|x^k-x\|)_{k\in \NN}$ is convergent for all $x\in S;$
\item if one cluster point $x^*$ of $(x^k)_{k\in\NN}$ belongs to $S$, then the sequence $(x^k)_{k\in \NN}$ converges to $x^*$.
\end{enumerate}
\end{proposition}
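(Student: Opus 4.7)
The plan is to prove the three assertions in the order (ii), (i), (iii), since (ii) implies (i) almost immediately and, combined with a one-line subsequential argument, also yields (iii). The whole proposition is essentially a repackaging of the monotone convergence theorem on $\RR$ plus the observation that tail properties of real sequences are insensitive to finitely many terms, so I do not expect any real obstacle.

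For (ii) I would fix an arbitrary $x \in S$ and apply the Fej\'er definition to get an index $k_0$ such that $\|x^{k+1}-x\| \le \|x^k-x\|$ for every $k \ge k_0$. Then the nonnegative real sequence $(\|x^k-x\|)_{k \ge k_0}$ is nonincreasing and bounded below by $0$, hence convergent by the monotone convergence theorem; adding back the finitely many terms with $k<k_0$ does not affect convergence, so $(\|x^k-x\|)_{k\in\NN}$ converges. For (i), since $S\neq\emptyset$ I can pick some $x\in S$, invoke (ii) to get a bound $M$ with $\|x^k-x\|\le M$ for all $k$, and conclude by the triangle inequality that $\|x^k\|\le \|x\|+M$ for every $k$, so $(x^k)$ is bounded.

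For (iii), assume $x^{*}\in S$ is a cluster point and extract a subsequence $(x^{k_j})$ with $x^{k_j}\to x^{*}$, which means $\|x^{k_j}-x^{*}\|\to 0$. Applying (ii) with $x = x^{*}\in S$, the whole real sequence $(\|x^k-x^{*}\|)_{k\in\NN}$ converges; since one of its subsequences tends to $0$, the limit of the whole sequence must also be $0$. Therefore $\|x^k-x^{*}\|\to 0$, i.e.\ $x^k\to x^{*}$, as required. The only subtlety worth flagging is the tail index $k_0$ in the Fej\'er definition, but as noted it is harmless for both monotone convergence and boundedness.
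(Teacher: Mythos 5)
Your proof is correct. Note, though, that the paper does not actually prove this proposition: it simply cites Proposition 5.4 and Theorem 5.5 of Bauschke--Combettes, \emph{Convex Analysis and Monotone Operator Theory in Hilbert Spaces}, for the three items. Your argument supplies the standard elementary proof that those references contain: (ii) follows from the monotone convergence theorem applied to the eventually nonincreasing, nonnegative sequence $(\|x^k-x\|)_{k\ge k_0}$; (i) follows from (ii) and the triangle inequality using any single $x\in S$; and (iii) follows because a convergent real sequence with a subsequence tending to $0$ must itself tend to $0$. You also correctly handle the only delicate point in this paper's (slightly nonstandard) definition of Fej\'er convergence, namely the tail index $k_0$, observing that discarding finitely many terms affects neither monotone convergence nor boundedness. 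What your self-contained route buys is independence from the cited monograph (whose statements are formulated for the usual definition without the $k_0$ clause); what the citation buys the author is brevity. There is no gap.
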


\begin{proof}
(i) and (ii) See Proposition $5.4$ in \cite{librobauch}. (iii) See Theorem $5.5$ in \cite{librobauch}.
\end{proof}

We following with some known results on the orthogonal projection that will be useful for the well-definition of the stopping criteria. Moreover, for proving the Fej\'{e}r convergence of the sequence generated by the algorithms.

\begin{proposition}\label{proj}
Let $X$ be any nonempty, closed and convex set in $\RR^n$. For all $x,y\in \RR^n$ and all $z\in X $ the following hold:
\begin{enumerate}
\item $ \|P_{X}(x)-P_{X}(y)\|^2 \leq \|x-y\|^2-\|(P_{X}(x)-x)-\big(P_{X}(y)-y\big)\|^2.$
\item $\la x-P_X(x),z-P_X(x)\ra \leq 0.$
\end{enumerate}
\end{proposition}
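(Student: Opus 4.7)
My plan is to prove (ii) first directly from the variational characterization of the projection, and then derive (i) by applying (ii) twice and doing an elementary expansion.

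For (ii), I would use the fact that $P_X(x)$ is the minimizer of the strictly convex function $y \mapsto \tfrac{1}{2}\|y-x\|^2$ over $X$. Since $X$ is convex, for any $z \in X$ and $t \in (0,1]$ the point $w_t := (1-t)P_X(x) + tz$ lies in $X$, so $\|w_t - x\|^2 \ge \|P_X(x) - x\|^2$. Expanding $\|w_t - x\|^2 = \|P_X(x) - x + t(z - P_X(x))\|^2$, subtracting $\|P_X(x)-x\|^2$, dividing by $t$, and letting $t \downarrow 0$ gives $2\langle P_X(x) - x, z - P_X(x)\rangle \ge 0$, which is exactly (ii).

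For (i), the plan is to apply (ii) twice: once at the point $x$ with test vector $z = P_X(y) \in X$, and once at $y$ with test vector $z = P_X(x) \in X$. This yields
\begin{equation*}
\langle x - P_X(x), P_X(y) - P_X(x)\rangle \le 0, \qquad \langle y - P_X(y), P_X(x) - P_X(y)\rangle \le 0.
\end{equation*}
Adding these two inequalities and rearranging produces the firm non-expansiveness estimate
\begin{equation*}
\|P_X(x) - P_X(y)\|^2 \le \langle x - y, P_X(x) - P_X(y)\rangle.
\end{equation*}

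The final step is purely algebraic. Setting $u := P_X(x) - P_X(y)$ and $v := x - y$, the desired inequality (i) reads $\|u\|^2 \le \|v\|^2 - \|u - v\|^2$, equivalently $\|u\|^2 + \|u-v\|^2 \le \|v\|^2$. Expanding $\|u-v\|^2 = \|u\|^2 - 2\langle u,v\rangle + \|v\|^2$ reduces this to $\|u\|^2 \le \langle u,v\rangle$, which is exactly the firm non-expansiveness just obtained. I do not anticipate any real obstacle: the only delicate point is making the one-sided derivative argument in the proof of (ii) clean, but that is routine since the objective is smooth.
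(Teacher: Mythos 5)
Your proof is correct. Note that the paper does not actually prove this proposition: it simply cites Lemmas 1.1 and 1.2 of Zarantonello, so there is no argument in the text to compare against. What you have written is the standard self-contained derivation — the obtuse-angle (variational) characterization of $P_X$ obtained by perturbing along the segment $(1-t)P_X(x)+tz$, followed by the firm non-expansiveness inequality $\|P_X(x)-P_X(y)\|^2\le\langle x-y,\,P_X(x)-P_X(y)\rangle$ and a direct expansion of $\|(P_X(x)-x)-(P_X(y)-y)\|^2$ — and every step checks out; in effect you have supplied the proof the paper delegates to the reference.
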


\begin{proof}
 (i) and (ii) see    Lemma    $1.1$    and    $1.2$    in    \cite{zarantonelo}.
\end{proof}

Now  we state some useful results on maximal monotone operators. The next proposition will be useful for proving the convergence of the sequences generated by both algorithms.

\begin{proposition}\label{inversa}
Let $T:dom(T)\subseteq\RR^n \rightarrow 2^{\RR^n}$ be a point-to-set and maximal monotone operator. If $\beta >0$ then the operator $(I+\beta\, T)^{-1}: \RR^n \rightarrow dom(T)$
 is single valued and maximal monotone.
\end{proposition}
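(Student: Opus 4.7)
The statement is the classical Minty resolvent theorem, asserting three things about $J_{\beta T}:=(I+\beta T)^{-1}$: it is single-valued, has full domain $\RR^n$ (with values in $\dom T$), and is itself maximal monotone. I would organize the proof along these three points, in that order.

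\textbf{Single-valuedness.} My first step is to take any $x\in\RR^n$ and two putative images $y_1,y_2\in(I+\beta T)^{-1}(x)$. By definition $x-y_i\in\beta T(y_i)$ for $i=1,2$, so $(x-y_i)/\beta\in T(y_i)$. Monotonicity of $T$ applied to the pairs $(y_1,(x-y_1)/\beta)$ and $(y_2,(x-y_2)/\beta)$ of $\gr T$ yields
\[
0\le\Big\langle y_1-y_2,\,\tfrac{1}{\beta}\big((x-y_1)-(x-y_2)\big)\Big\rangle=-\tfrac{1}{\beta}\|y_1-y_2\|^2,
\]
forcing $y_1=y_2$. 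The inclusion $\ran J_{\beta T}\subseteq\dom T$ is automatic since $y=J_{\beta T}(x)$ implies $(x-y)/\beta\in T(y)$, so $T(y)\neq\emptyset$.

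\textbf{Surjectivity of $I+\beta T$.} This is the main obstacle; it is precisely Minty's theorem, stating that $\ran(I+\beta T)=\RR^n$ is equivalent to maximal monotonicity of $T$. The plan is to quote it from a standard reference (e.g.~Bauschke--Combettes, already cited in the paper as \cite{librobauch}) rather than reprove it, since a self-contained proof would require the Moreau--Yosida regularization or a Brouwer/Banach fixed-point argument on the Yosida approximation $T_\lambda$, both of which would be disproportionate to the role of this proposition. Combined with single-valuedness, this gives that $J_{\beta T}$ is a function with domain $\RR^n$ and values in $\dom T$.

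\textbf{Maximal monotonicity of $J_{\beta T}$.} Take $x_1,x_2\in\RR^n$ and set $y_i:=J_{\beta T}(x_i)$, so that $(x_i-y_i)/\beta\in T(y_i)$. Monotonicity of $T$ gives
\[
0\le\big\langle y_1-y_2,\,(x_1-y_1)-(x_2-y_2)\big\rangle,
\]
which rearranges to $\|y_1-y_2\|^2\le\langle y_1-y_2,x_1-x_2\rangle$. In particular $\langle y_1-y_2,x_1-x_2\rangle\ge 0$, so $J_{\beta T}$ is monotone; and since it is an everywhere-defined, single-valued monotone operator on $\RR^n$, it admits no proper monotone extension (any candidate extension at a point $x$ would have to coincide with $J_{\beta T}(x)$ by the monotonicity inequality just established, applied to the pair $(x,v)$ versus $(x,J_{\beta T}(x))$). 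Hence $J_{\beta T}$ is maximal monotone, completing the proof.
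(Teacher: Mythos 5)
The paper disposes of this proposition with a bare citation to Minty's theorem, so your attempt to actually prove it is a genuinely different and more informative route. Your single-valuedness argument is correct, delegating the surjectivity of $I+\beta T$ to Minty's theorem is exactly the level of detail the paper itself adopts for the whole statement, and your derivation of the firm nonexpansiveness inequality $\|y_1-y_2\|^2\le\langle y_1-y_2,\,x_1-x_2\rangle$ is also correct.

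The gap is in your final step, the maximality of $J_{\beta T}$. The parenthetical justification --- comparing a candidate pair $(x,v)$ of a monotone extension against $(x,J_{\beta T}(x))$ --- is vacuous: the monotonicity inequality for two pairs with the same first coordinate reads $\langle x-x,\,v-J_{\beta T}(x)\rangle\ge 0$, i.e.\ $0\ge 0$, and gives no information about $v$. More seriously, the general principle you invoke is false: an everywhere-defined, single-valued monotone map need not be maximal. On $\RR$, take $f(x)=0$ for $x\le 0$ and $f(x)=1$ for $x>0$; the pair $(0,\tfrac12)$ is monotonically related to the graph of $f$, so $f$ admits a proper monotone extension. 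What rescues your argument is continuity, which you have already essentially established: firm nonexpansiveness gives $\|J_{\beta T}(x_1)-J_{\beta T}(x_2)\|\le\|x_1-x_2\|$, and a continuous, everywhere-defined monotone operator on $\RR^n$ is maximal. The standard argument runs: if $\langle x-x',\,v-J_{\beta T}(x')\rangle\ge 0$ for all $x'$, set $x'=x+td$ with $t>0$, divide by $t$, let $t\downarrow 0$ and use continuity to obtain $\langle d,\,v-J_{\beta T}(x)\rangle\le 0$ for every direction $d$, whence $v=J_{\beta T}(x)$. With that substitution your proof is complete.
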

\begin{proof}
See Theorem $4$ in \cite{minty}.
\end{proof}

The next proposition will be used for the well-definition of the stopping criteria and the convergence of the sequences generated by both algorithms.
\begin{proposition}\label{parada}
Given $\beta>0$ and the maximal monotone operators $A: dom(A)\subseteq \RR^n\to \RR^n$  and $B: dom(B)\subseteq \RR^n\rightarrow 2^{\RR^n}$, if $x\in \dom(A)\cap \dom(B)$ then
 $$x=(I+\beta B)^{-1}(I-\beta A)(x),$$ if and only if, $0\in (A+B)(x)$.
\end{proposition}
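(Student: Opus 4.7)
The plan is to reduce the fixed-point equation $x=(I+\beta B)^{-1}(I-\beta A)(x)$ to the inclusion $0\in(A+B)(x)$ by a short chain of equivalences, exploiting the fact that the resolvent of a maximal monotone operator is single-valued.

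First I would invoke Proposition \ref{inversa} with the operator $B$: since $B$ is maximal monotone and $\beta>0$, the operator $(I+\beta B)^{-1}$ is single-valued. Consequently, for any $u,v\in\RR^n$, the equality $v=(I+\beta B)^{-1}(u)$ is equivalent to $u\in (I+\beta B)(v)$, i.e.\ $u-v\in \beta B(v)$. Applying this with $u=(I-\beta A)(x)$ and $v=x$ converts the fixed-point equation into the membership statement
\[
(I-\beta A)(x)-x \;\in\; \beta B(x).
\]

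Next I would simplify the left-hand side: $(I-\beta A)(x)-x = -\beta A(x)$, so the condition becomes $-\beta A(x)\in \beta B(x)$. Since $\beta>0$, I can divide by $\beta$ (equivalently, use that $\beta B(x)=\{\beta b : b\in B(x)\}$ and cancel) to obtain $-A(x)\in B(x)$, which is precisely $0\in A(x)+B(x)$. The well-definedness of these manipulations relies on $x\in\dom(A)\cap\dom(B)$, which is assumed in the hypothesis, together with $A$ being point-to-point so that $A(x)$ is a single vector that can be moved from one side of the inclusion to the other.

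Every step in this derivation is reversible: starting from $0\in A(x)+B(x)$, one has $-A(x)\in B(x)$, hence $-\beta A(x)\in\beta B(x)$, hence $(I-\beta A)(x)\in (I+\beta B)(x)$, and applying the single-valued map $(I+\beta B)^{-1}$ yields $x=(I+\beta B)^{-1}(I-\beta A)(x)$. There is no real obstacle here; the only point requiring some care is to justify that $(I+\beta B)^{-1}$ is a genuine function (so that "applying" it is unambiguous), which is exactly the content of Proposition \ref{inversa}. The assumption $x\in\dom(A)\cap\dom(B)$ ensures that $A(x)$ and $B(x)$ are nonempty, so the statements $-A(x)\in B(x)$ and $0\in A(x)+B(x)$ are meaningful.
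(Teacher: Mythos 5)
Your argument is correct. Note, however, that the paper does not actually prove this proposition: its ``proof'' is a one-line citation to Proposition~3.13 of Eckstein's thesis \cite{PhD-E}. What you have written is the standard self-contained verification that underlies that citation: using Proposition~\ref{inversa} to justify that $(I+\beta B)^{-1}$ is single-valued, so that $x=(I+\beta B)^{-1}(I-\beta A)(x)$ is equivalent to the graph condition $(I-\beta A)(x)\in(I+\beta B)(x)$, and then cancelling $x$ and the factor $\beta>0$ to arrive at $-A(x)\in B(x)$, i.e.\ $0\in(A+B)(x)$. Every step is indeed an equivalence, and you correctly flag the two points that need care: that the resolvent is a genuine function (so equality and membership coincide), and that $x\in\dom(A)\cap\dom(B)$ makes both sides meaningful --- in fact, in the forward direction $x\in\dom(B)$ comes for free since the range of $(I+\beta B)^{-1}$ is $\dom(B)$. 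What your approach buys over the paper's is self-containedness: the reader sees exactly which hypotheses (single-valuedness of $A$, $\beta>0$, maximal monotonicity of $B$ via Proposition~\ref{inversa}) are used and where, rather than having to consult an external thesis.
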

\begin{proof}
See Proposition $3.13$ in \cite{PhD-E}.
\end{proof}

Now we prove a lemma which ensures that the hyperplane used in the algorithms contains the solution set of problem \eqref{problema}.

\begin{lemma}\label{propseq}
Given a families of operators $\{A_i,B_i\}_{i\in\mathbb{I}}$, such that for all $i\in \mathbb{I}$ $\dom B_i\subseteq \dom A_i$, take $x,u \in \RR^n$ with $x\in \dom B_i$   for all $i\in \mathbb{I}$. Define:
\begin{equation}\label{H(x)}
H_i(x,u) := \big\{ y\in \RR^n :\la A_i(x)+u,y-x\ra\le 0\big \}.
\end{equation}
  Then for all $(x,u)\in Gr(B_i)$, $S_*^i\subseteq H_i(x,u)$, for all $i\in \mathbb{I}$. Therefore $S_* \subset H_i(x,u)$ for all $i \in \mathbb{I}$.
\end{lemma}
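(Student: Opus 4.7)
The plan is to fix an index $i \in \mathbb{I}$, pick an arbitrary $z \in S^i_*$, and show directly that $z$ satisfies the defining inequality of $H_i(x,u)$, namely $\langle A_i(x)+u, z-x\rangle \le 0$. The statement about $S_*$ then follows at once, since $S_* = \bigcap_{i\in\mathbb{I}} S^i_* \subseteq S^i_* \subseteq H_i(x,u)$.

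For the core inclusion, the hypothesis $z \in S^i_*$ says $0 \in A_i(z) + B_i(z)$, so there exists an element of $B_i(z)$ equal to $-A_i(z)$ (recall $A_i$ is point-to-point). I would write this as $-A_i(z) \in B_i(z)$. Since $(x,u) \in \operatorname{Gr}(B_i)$, the monotonicity of $B_i$ applied to the pairs $(x,u)$ and $(z,-A_i(z))$ gives
\begin{equation*}
\langle x - z,\ u - (-A_i(z))\rangle \;=\; \langle x-z,\ u + A_i(z)\rangle \;\ge\; 0.
\end{equation*}
In parallel, the monotonicity of $A_i$ (point-to-point, maximal monotone) applied at $x$ and $z$ yields
\begin{equation*}
\langle x - z,\ A_i(x) - A_i(z)\rangle \;\ge\; 0.
\end{equation*}
Adding the two inequalities makes the $A_i(z)$ terms cancel, producing $\langle x-z,\ A_i(x)+u\rangle \ge 0$, which is exactly $\langle A_i(x)+u,\ z - x\rangle \le 0$. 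Hence $z \in H_i(x,u)$.

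There is no real obstacle here; the only subtle point is remembering that $A_i$ being point-to-point lets us read $0 \in A_i(z)+B_i(z)$ as the membership $-A_i(z)\in B_i(z)$, so that monotonicity of $B_i$ can be invoked against the given pair $(x,u)$. The assumption $\dom B_i \subseteq \dom A_i$ and $x \in \dom B_i$ guarantees that all expressions ($A_i(x)$, $A_i(z)$, $u$) are well-defined, so the two monotonicity inequalities can legitimately be summed. Everything else is a one-line combination of standard monotonicity relations.
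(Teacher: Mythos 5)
Your proof is correct and follows essentially the same route as the paper: the paper invokes the monotonicity of the sum $A_i+B_i$ at the pairs $(x, A_i(x)+u)$ and $(x^*, A_i(x^*)+v^*)$ in a single step, which is precisely the sum of the two separate monotonicity inequalities you write down for $A_i$ and $B_i$. The only cosmetic difference is that you make this decomposition explicit; the content and the key observation ($-A_i(z)\in B_i(z)$ for a solution $z$) are identical.
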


\begin{proof}
Take $x^{*}\in S_*^i$. Using the definition of the solution, there exists $v^{*}\in B_i(x^{*})$, such that $0=A_i(x^{*})+v^{*}$. By the monotonicity of $A_i+B_i$, we have
$$\la A_i(x)+u -(A_i(x^{*})+v^{*}), x-x^{*}\ra\ge 0, $$
 for all $(x,u)\in Gr(B_i)$.
Hence,
$$\la A_i(x)+u, x^{*}-x\ra \le 0$$
and by \eqref{H(x)}, $x^{*}\in H_i(x,u)$.
\end{proof}

\section{The Algorithms}\label{section3}
In this section, we present two algorithms for solving the problem \eqref{problema}. For all $i\in \mathbb{I}$, let  $A_i:\dom(A_i)\subset\RR^n \rightarrow \RR^n$ be point-to-point maximal monotone operators and $B_i:\dom(B_i)\subset\RR^n\rightarrow 2^{\RR^n} $ be point-to-set and maximal monotone operators. We assume that:
\begin{enumerate}[leftmargin=0.5in, label=({\bf A\arabic*})]

\item\label{a1} $dom (B_i)\subseteq dom (A_i)$, for all $i \in \mathbb{I}:=\{1,2,3, \cdots, m\}$ with $m\in \NN$.
\item \label{a2} $S_*\ne \emptyset$.
\item\label{a3} For each bounded and closed subset $V \subset \cap_{i=1}^m dom(B_i)$ there exists $R>0$, such that $B_i(x)\cap B[0,R]\neq\emptyset$,  for all $x\in V$ and $i\in \mathbb{I}$.
\item \label{a4} For all $i\in \mathbb{I}$, the operator $A_i$ is continuous on $\dom (A_i)$.
\end{enumerate}

Assumptions \ref{a1} and \ref{a2} are standard in the literature. We emphasize that assumption \ref{a3} holds trivially if $dom(B_i)=\RR^n$ or $V\subset int(dom(B_i))$ or $B_i$ is the normal cone of any subset of $dom(B_i)$ for all $i\in\mathbb{I}$, i.e.,
 in the application to systems of variational inequality problems, this assumption is trivially satisfied. The operators $A_i$ for $i\in\mathbb{I}$ are all continuous on the interior of its domain by maximality, then the Assumption \ref{a4} is for ensure the continuity on the boundary of the domain.  Note that, when $\dom(A_i)$ is open for all $i\in \mathbb{I}$ this assumption is not required.  Even more, when the set $X$ (defined bellow) is a subset of the interior of $\cap_{i\in\mathbb{I}}$, also the assumption is not necessary.

Choose any nonempty, closed, bounded and convex set, $X \subseteq \cap_{i\in \mathbb{I}}dom (B_i)$, satisfying  $X\cap S_*\ne \emptyset$. There exist some choices for $X$, for example, when the sets $\dom(B_i)$ are closed  for all $ i\in\mathbb{I}$, so they are convex (see \cite{minty2}), we can think in $X=B[0,L]\cap_{i\in \mathbb{I}}dom (B_i)$ for $L$ be large enough, see more details in \cite{tseng, rei-yun, phdthesis}. The necessity of $X$ be bounded is only for the applicability of the Assumption $\ref{a3}$, then in a case of systems of variational inequalities problem, the set $X$ can be unbounded, like was used in \cite{tseng}. For example, consider $B_i=N_{C_i}$ for all $i\in\mathbb{I}$, then $X=\cap_{i\in \mathbb{I}}C_i$ is a good choice. See Section \ref{examples} for specific choices of $X$ in some examples.

For both algorithms we consider the sequence $(\beta_k)_{k=0}^{\infty}$ satisfying that $(\beta_k)_{k\in \NN}\subseteq [\check{\beta},\hat{\beta}] $ for $0<\check{\beta} \leq \hat{\beta}<\infty$, and $\theta, \delta\in(0,1)$, let $R>0$ as in Assumption \ref{a3} taking $V=X$. The algorithms are defined as follows:
\begin{center}
\fbox{\begin{minipage}[b]{\textwidth}
\begin{Calg}{1}\label{concep} Let $(\beta_k)_{k\in \NN}, \theta, \delta, R \mbox{ and } \mathbb{I}$ like above.
\begin{retraitsimple}
\item[] {\bf Step~0 (Initialization):} Take $x^0\in X$.

\item[] {\bf Step~1 (Iterative Step 1):} Given $x^k$, compute for all $i\in \mathbb{I}$,
\begin{equation}{\label{jota1}}
J_i(x^k,\beta_{k}):=(I+\beta_{k}B_i)^{-1}(I-\beta_{k}A_i)(x^{k}).
\end{equation}
\item[]{\bf Step~2 (Stopping Criteria 1):} Define $\mathbb{I}_k^*:=\{i\in \mathbb{I}: x^k=J_{i}(x^k,\beta_k)\}$. If $\mathbb{I}_k^*=\mathbb{I}$ stop.
\item[]{\bf Step~2.5 (Definition):} $\forall i\in \mathbb{I}_k^*$ define $\bar{x}_i^k:=x^k$ and $\bar{u}_i^k:=-A_i(x^k)\in B_i(x^k)$.
\item[] {\bf Step~3 (Inner Loop):} Otherwise, for all $i\in \mathbb{I}\setminus \mathbb{I}_k^*$ begin the inner loop over $j$.
 Put $j=0$ and choose any $u_{(j,i)}^{k}\in B_i\big(\theta^{j}J_i(x^{k},\beta_k)+(1-\theta^{j})x^k\big)\cap B[0,R]$. If
\begin{equation}\label{jk1}
\Big \la A_i \big(\theta^{j}J_i(x^{k},\beta_k)+(1-\theta^{j})x^k\big)+u^{k}_{(j,i)}, x^k-J_i(x^k,\beta_k)\Big \ra\geq \frac{\delta}{\beta_k}\|x^k -J_i(x^k,\beta_k)\|^2,
\end{equation}
then $j_i(k):=j$ and stop.
Else, $j=j+1$.
\item[] {\bf Step~4 (Iterative Step 2):} Set for all $i\in \mathbb{I}\setminus \mathbb{I}_k^*$

\begin{equation}{\label{alphak}}
\alpha_{k,i}:=\theta^{j_i(k)},
\end{equation}
\begin{equation}{\label{ubar}}
\bar{u}_i^k:=u^k_{j_i(k),i}
\end{equation}
\begin{equation}{\label{xbar}}
\bar{x}_i^k:=\alpha_{k,i} J_{i}(x^k,\beta_k)+(1-\alpha_{k,i})x^k
\end{equation}
and
\begin{equation}{\label{Fk}}
x^{k+1}:=P_X\big(P_{H_k}(x^k)\big).
\end{equation}
\item[] {\bf Step~5 (Stopping Criteria 2):} If $x^{k+1}=x^k$ then stop. Otherwise, set $k\leftarrow k+1$ and go to {\bf Step~1}.
\end{retraitsimple}
\end{Calg}\end{minipage}}
\end{center}
where $H_i(x,u)$ as in \eqref{H(x)},
\begin{equation}\label{hk}
H_k:=\cap_{i\in \mathbb{I}\setminus \mathbb{I}_k^*}H_i( \bar{x}_{i}^{k},\bar{u}_i^k).
\end{equation}
Observe that for all $i\in \mathbb{I}_k^*$ using Proposition \ref{parada} we have that $0\in A_i(\bar{x}_i^k)+B_i(\bar{x}_i^k)$, hence, $-A_i(\bar{x}_i^k)\in B_i(\bar{x}_i^k)$, proving that in this case $H_i(\bar{x}_i^k,\bar{u}_i^k)=\RR^n$. By definition of $H_k$ and {\bf Step 2.5} of  {\bf Algorithm \ref{concep}} we have that $H_k\subseteq H_{i}(\bar{x}_i^k,\bar{u}_i^k)$ for all $i \in \mathbb{I}$.

Note that in {\bf Algorithm \ref{concep}}, we project onto the intersection of the separating hyperplanes, which are at most $m$. When the number of component of the system is large, this method requires, at each iteration,  solving a non-trivial subproblem. But nevertheless, in \cite{van}, this intersection is computed onto a largest number of hyperplanes. In view of this possible drawback, we propose the second algorithm, in which we do not need to compute any intersection.

For the second algorithm we will make use of the function $\rho:\NN \rightarrow \mathbb{I}$, that is any surjective and periodic function. We can choose
for example,  the function remainder after division by $m$, $\rho(n)=rem(n,m)$ and defined by $\rho(mk)=m$ for all $k\in \NN$.

 {\bf Algorithm \ref{concep1}}, combines the Alternating Projection Method, the Forward-Backward Method and the ideas of the separating hyperplane.
 Note that in {\bf Algorithm \ref{concep1}}, the iterative process does not depend on the number of equations involved in the system. At each iteration we use only one component
of the system. Hence, this algorithm is recommended for systems with a large number of components. We refer the reader to the papers \cite{reich, reich1} which have similar ideas on Alternating Projection Algorithm.

\begin{center}
\fbox{\begin{minipage}[b]{\textwidth}
\begin{Calg}{2}\label{concep1} Let $(\beta_k)_{k\in \NN}, \theta, \delta, R \mbox{ and } \mathbb{I}$ like above.
\begin{retraitsimple}
\item[] {\bf Step~0 (Initialization):} Take $x^0\in X$.

\item[] {\bf Step~1 (Iterative Step 1):} Given $x^k$, compute:
\begin{equation}{\label{jota}}
J_{\rho(k)}^{k}:=(I+\beta_{k}B_{\rho(k)})^{-1}(I-\beta_{k}A_{\rho(k)})(x^{k}).
\end{equation}
\item[] {\bf Stopping Criteria 1} If $x^k=J_{\rho(k)}^k$ put $\rho(k)\in \mathbb{I}^*_k$ set $k=k+1$ and go to {\bf Step 1}. If $\mathbb{I}_k^*=\mathbb{I}$ , then $x^k \in S_*$.
\item[] {\bf Step~1.1 (Inner Loop):} Begin the inner loop over $j$.
 Put $j=0$ and choose any \newline $u_{(j,\rho(k))}^{k}\in B_{\rho(k)}\big(\theta^{j}J_{\rho(k)}^k+(1-\theta^{j})x^k\big)\cap B[0,R]$. If
\begin{equation}\label{jk}
\Big \la A_{\rho(k)} \big(\theta^{j}J_{\rho(k)}^k+(1-\theta^{j})x^k\big)+u^{k}_{(j,\rho(k))}, x^k-J_{\rho(k)}^k\Big \ra\geq \frac{\delta}{\beta_k}\|x^k -J_{\rho(k)}^k\|^2,
\end{equation}
then $j(k):=j$ and stop.
Else, $j=j+1$.
\item[] {\bf Step~2 (Iterative Step 2):} Define:
\begin{equation}{\label{alphak}}
\alpha_{k}:=\theta^{j(k)},
\end{equation}
\begin{equation}{\label{ubar}}
\bar{u}^k:=u^k_{j(k),\rho(k)}
\end{equation}
\begin{equation}{\label{xbar}}
\bar{x}^k:=\alpha_{k} J_{\rho(k)}^k+(1-\alpha_{k})x^k
\end{equation}
\begin{equation}{\label{Fk}}
x^{k+1}=P_X\big(P_{H_{\rho(k)}(\bar{x}^{k},\bar{u}^{k})}(x^{k})\big).
\end{equation}
set $k=k+1$, empty $\mathbb{I}^*_k$ and go to {\bf Step 1}.
\end{retraitsimple}
\end{Calg}\end{minipage}}
\end{center}

where $H_i(x,u)$  as in \eqref{H(x)}.

\section{Convergence Analysis}\label{section4}
In this section, we analyze the convergence of the algorithms presented in the previous section. First, we present some general properties as well as prove the well-definition of both algorithms.

\noindent From now on, $(x^k)_{k\in \NN}$ is the sequence generated by the algorithm.
\begin{proposition}\label{propdef}
 In both algorithms the {\bf Inner Loop} is well-defined.
\end{proposition}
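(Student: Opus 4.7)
The plan is to argue by contradiction: assume the Armijo-type condition \eqref{jk1} fails at every inner iteration $j$, and derive a contradiction from the limit $\theta^{j}\to 0$. I only need to handle Algorithm~\ref{concep} (fix $k$ and $i\in\mathbb{I}\setminus\mathbb{I}_k^*$); Algorithm~\ref{concep1} is the same argument applied to the single index $\rho(k)$.

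First I would unpack what the resolvent gives us. Writing $J_k:=J_i(x^k,\beta_k)$, Proposition~\ref{inversa} says $J_k$ is well-defined in $\dom(B_i)$ and the definition \eqref{jota1} rewrites as $x^k-\beta_k A_i(x^k)\in J_k+\beta_k B_i(J_k)$, i.e.
\begin{equation*}
\tfrac{1}{\beta_k}(x^k-J_k)-A_i(x^k)\;\in\;B_i(J_k).
\end{equation*}
Also, since $i\notin\mathbb{I}_k^*$, $x^k\neq J_k$, so $\|x^k-J_k\|>0$. Letting $z_j:=\theta^{j}J_k+(1-\theta^{j})x^k$, note the key identity $z_j-J_k=(1-\theta^{j})(x^k-J_k)$. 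Assumption~\ref{a3} applied to $V=X$ (together with $x^k\in X$ and $J_k\in\dom B_i$) ensures that the element $u^{k}_{(j,i)}\in B_i(z_j)\cap B[0,R]$ required in the inner loop actually exists.

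Next I would suppose, toward a contradiction, that \eqref{jk1} fails for every $j\in\NN$. Monotonicity of $B_i$ applied to the pairs $\bigl(J_k,\tfrac{1}{\beta_k}(x^k-J_k)-A_i(x^k)\bigr)$ and $(z_j,u^{k}_{(j,i)})$ yields
\begin{equation*}
\Bigl\langle u^{k}_{(j,i)}-\tfrac{1}{\beta_k}(x^k-J_k)+A_i(x^k),\,z_j-J_k\Bigr\rangle\;\ge\;0.
\end{equation*}
Substituting $z_j-J_k=(1-\theta^{j})(x^k-J_k)$ and dividing by $(1-\theta^{j})>0$ gives
\begin{equation*}
\langle u^{k}_{(j,i)}+A_i(x^k),\,x^k-J_k\rangle\;\ge\;\tfrac{1}{\beta_k}\|x^k-J_k\|^{2}.
\end{equation*}
Adding and subtracting $A_i(z_j)$ and combining with the (assumed) failure of \eqref{jk1} then produces
\begin{equation*}
\langle A_i(x^k)-A_i(z_j),\,x^k-J_k\rangle\;>\;\tfrac{1-\delta}{\beta_k}\|x^k-J_k\|^{2}.
\end{equation*}

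Finally I would let $j\to\infty$. Since $\theta\in(0,1)$, $z_j\to x^k$; by Assumption~\ref{a4} (continuity of $A_i$ on $\dom A_i$, which contains $z_j$ via Assumption~\ref{a1}), the left-hand side tends to $0$, whereas the right-hand side is the strictly positive constant $\tfrac{1-\delta}{\beta_k}\|x^k-J_k\|^2$ (using $\delta<1$, $\beta_k\le\hat\beta$, and $J_k\neq x^k$). This contradiction shows that some finite $j$ satisfies \eqref{jk1}, so the inner loop terminates. The main obstacle is just the careful bookkeeping in the monotonicity step, together with checking that $u^{k}_{(j,i)}$ can indeed be selected (which is exactly the role of \ref{a3}) and that the continuity of $A_i$ is available on the segment connecting $x^k$ and $J_k$ (the role of \ref{a1} and \ref{a4}); once those are in place the contradiction is automatic because $\delta$ is strictly less than $1$.
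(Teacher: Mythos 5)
Your proof is correct, but it reaches the contradiction by a genuinely different mechanism than the paper's. The paper argues entirely at the limit $j\to\infty$: it uses the boundedness $u^{k}_{(j,i)}\in B[0,R]$ to extract a convergent subsequence, invokes the closed-graph property of the maximal monotone $B_i$ to place the limit $u_i^k$ in $B_i(x^k)$, passes to the limit in the negated condition \eqref{jk1}, and only then applies monotonicity of $B_i$ between the pairs $(x^k,u_i^k)$ and $(J_i^k,v_i^k)$ coming from the resolvent identity \eqref{jota1}. You instead apply monotonicity of $B_i$ at each finite $j$, between $(z_j,u^{k}_{(j,i)})$ and $\bigl(J_k,\tfrac{1}{\beta_k}(x^k-J_k)-A_i(x^k)\bigr)$, and exploit the collinearity $z_j-J_k=(1-\theta^{j})(x^k-J_k)$ so that, after dividing by $1-\theta^{j}$, the term $u^{k}_{(j,i)}$ cancels exactly against the negated Armijo inequality; the contradiction then rests only on the continuity of $A_i$ at $x^k$ (Assumption \ref{a4}) and on $\delta<1$. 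This buys something real: no compactness argument on the $u$'s and no closed-graph property of $B_i$ are needed, so the bound $R$ from Assumption \ref{a3} plays no role in the termination argument beyond guaranteeing that some $u^{k}_{(j,i)}$ can be selected at all. Two harmless bookkeeping points: the division by $1-\theta^{j}$ requires $j\ge 1$ (at $j=0$ one has $z_0=J_k$), which does not matter since you send $j\to\infty$; and the existence of $u^{k}_{(j,i)}\in B_i(z_j)\cap B[0,R]$ is really part of the algorithm's specification (the points $z_j$ need not lie in the set $V=X$ used in \ref{a3}), an issue your proof shares with the paper's and which does not affect the validity of either argument.
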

\begin{proof}
 Here we use $i$ as in the {\bf Algorithm 1}, but nothing change if we use $\rho(k)$ as in {\bf Algorithm 2}, both algorithms have the same {\bf Inner Loop}. The proof of the well-definition of $j_i(k)$ is by contradiction. If {\bf Algorithm 1} or {\bf 2} reaches the {\bf Inner Loop}, then $i \notin \mathbb{I}^*_k$ . Now, assume that for all $j\ge0$ having chosen $u_{(j,i)}^{k}\in B_{i}\big(\theta^j J_{i}^k+(1-\theta^j)x^k\big)\cap B[0,R]$,
\begin{equation*}
\Big\la A_{i} \big(\theta^{j}J_{i}^k+(1-\theta^{j})x^k\big)+u^{k}_{j,i}, x^k-J_{i}^k\Big\ra < \frac{\delta}{\beta_k}\|x^k - J_{i}^k\|^2.
\end{equation*}
Since the sequence $(u^{k}_{(j,{i})})_{j=0}^{\infty}$ is bounded, there exists a subsequence $(u^{k}_{(\ell_j,{i})})_{j=0}^{\infty}$ of $(u^{k}_{(j,{i})})_{j=0}^{\infty}$,
 which converges to an element $ u_{i}^k$ belonging to $B_{i}(x^k)$ by closed graph property, see Proposition 4.2.1(ii) in \cite{librobauch}. Taking the limit over the subsequence $(\ell_j)_{j\in \NN}$, we get
\begin{equation}{\label{lim}}
\big\la\beta_k A_{i}(x^k)+\beta_k u_{i}^k, x^k -J_{i}^k\big \ra \le \delta \|x^k - J_{i}^k\|^2.
\end{equation}
It follows from (\ref{jota1}) that
\begin{equation*}{\label{res}}
 \beta_k A_i(x^k)=x^k-J_i^k-\beta_k v_i^k,
 \end{equation*}
 for some  $ v_i^{k}\in B_i(J_i^k)$.\\
Now, the above equality together with (\ref{lim}), lead to
\begin{equation*}
\|x^k - J_i^k\|^2\le\Big\la x^k-J_i^k-\beta_k v_i^k+\beta_k u_i^k, x^k -J_i^k\Big \ra \le \delta \|x^k - J_i^k\|^2,
\end{equation*}
using the monotonicity of $B_i$ for the first inequality. So,
$$(1-\delta)\|x^k - J_i^k\|^2\le 0,$$
implying that $x^k=J_i^k$, which contradicts that $i\in \mathbb{I}\setminus \mathbb{I}_k^*$. Thus, the algorithm is well-defined.
\end{proof}

A useful algebraic property on the sequence generated by  {\bf Algorithm 1} and {\bf 2}, which is a direct consequence of the {\bf Inner Loop}, is the following.
\begin{corollary}\label{coro}
Let $(x^k)_{k\in \NN}$, $(\beta_k)_{k\in \NN}$ and $(\alpha_{(k,i)})_{k\in \NN}$ be sequences generated by {\bf Algorithm 1} or {\bf 2}. With $\delta$ and $\hat{\beta}$ as defined
 in the algorithms. Then,
\begin{equation}\label{desig-muy-usada}
\la A_i(\bar{x}_i^{k})+\bar{u}_i^{k},x^{k}-\bar{x}_i^{k} \ra  \ge\frac{\alpha_{k,i}\delta}{\hat{\beta}}\|x^{k}-J_i(x^{k},\beta_{k})\|^2\geq 0,
\end{equation}
for all $k$.
\end{corollary}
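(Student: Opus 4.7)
The plan is to derive the inequality directly from the termination condition of the Inner Loop, plus the definition of $\bar{x}_i^k$. I would split into the two cases $i \in \mathbb{I}_k^*$ and $i \in \mathbb{I}\setminus\mathbb{I}_k^*$.

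For $i \in \mathbb{I}_k^*$, we have $x^k = J_i(x^k,\beta_k)$ by definition, and $\bar{x}_i^k = x^k$ by \textbf{Step 2.5}, so both sides of \eqref{desig-muy-usada} vanish and the inequality is trivial.

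For $i \in \mathbb{I}\setminus \mathbb{I}_k^*$, the key observation is that the convex combination
\[
\bar{x}_i^k = \alpha_{k,i} J_i(x^k,\beta_k) + (1-\alpha_{k,i}) x^k
\]
yields the identity $x^k - \bar{x}_i^k = \alpha_{k,i}\bigl(x^k - J_i(x^k,\beta_k)\bigr)$. The Inner Loop terminates at $j = j_i(k)$, so the criterion \eqref{jk1} reads
\[
\bigl\langle A_i(\bar{x}_i^k) + \bar{u}_i^k,\; x^k - J_i(x^k,\beta_k)\bigr\rangle \;\ge\; \frac{\delta}{\beta_k}\,\|x^k - J_i(x^k,\beta_k)\|^2,
\]
after substituting the definitions \eqref{alphak}, \eqref{ubar}, \eqref{xbar}. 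Multiplying this inequality by $\alpha_{k,i} = \theta^{j_i(k)} > 0$ and folding the factor into the second slot of the inner product gives
\[
\bigl\langle A_i(\bar{x}_i^k)+\bar{u}_i^k,\; x^k - \bar{x}_i^k\bigr\rangle \;\ge\; \frac{\alpha_{k,i}\delta}{\beta_k}\,\|x^k - J_i(x^k,\beta_k)\|^2.
\]
The final step is to use $\beta_k \le \hat{\beta}$ from the standing assumption on $(\beta_k)_{k\in\NN}$ to replace $\beta_k$ in the denominator by $\hat{\beta}$, obtaining the stated bound. Non-negativity of the right-hand side is immediate from $\alpha_{k,i},\delta,\hat{\beta} > 0$.

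There is no real obstacle here; the statement is essentially a rewriting of the Armijo-type inner-loop exit condition once one notices the scaling relation $x^k-\bar{x}_i^k = \alpha_{k,i}(x^k-J_i(x^k,\beta_k))$. The argument for \textbf{Algorithm 2} is identical after replacing the index $i$ by $\rho(k)$, as noted in Proposition~\ref{propdef}.
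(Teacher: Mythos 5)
Your proof is correct and follows exactly the route the paper intends: the paper states Corollary~\ref{coro} without proof as a ``direct consequence of the Inner Loop,'' and your argument---substituting the definitions \eqref{alphak}--\eqref{xbar} into the exit condition \eqref{jk1}, using $x^k-\bar{x}_i^k=\alpha_{k,i}\bigl(x^k-J_i(x^k,\beta_k)\bigr)$, and bounding $\beta_k\le\hat{\beta}$---is precisely that consequence spelled out. Your explicit treatment of the degenerate case $i\in\mathbb{I}_k^*$, where both sides vanish, is a welcome addition the paper leaves implicit.
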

Note that we have the same property if we replace $i$ by $\rho(k)$.

The following proposition shows that the {\bf Stopping Criteria 1} of both algorithms are well defined.
\begin{proposition}\label{stop1}
If {\bf Algorithm 1} or {\bf 2} stops at iteration $k$ by the {\bf Stopping Criteria 1}, then $x^k\in S_*$.
\end{proposition}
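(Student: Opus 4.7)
The plan is to invoke Proposition \ref{parada}, which asserts that, for any stepsize $\beta>0$, one has $x = (I+\beta B_i)^{-1}(I-\beta A_i)(x)$ if and only if $0\in (A_i+B_i)(x)$. Thus, showing that $x^k$ is a fixed point of the forward-backward operator for every $i\in\mathbb{I}$ immediately yields $x^k \in \cap_{i\in\mathbb{I}} S_*^i = S_*$. I would argue separately for the two algorithms, since their Stopping Criteria 1 are phrased somewhat differently.

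For \textbf{Algorithm 1}, termination means $\mathbb{I}_k^* = \mathbb{I}$, which by the definition of $\mathbb{I}_k^*$ amounts to $x^k = J_i(x^k,\beta_k)$ for every $i\in\mathbb{I}$ at the common stepsize $\beta_k$. Applying Proposition \ref{parada} componentwise yields $0 \in A_i(x^k) + B_i(x^k)$ for every $i$, hence $x^k \in S_*$.

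For \textbf{Algorithm 2}, I would first observe that the iterate $x^k$ is updated only in Step 2, and Step 2 is bypassed precisely when $x^k = J_{\rho(k)}^k$; in that case, $\rho(k)$ is appended to $\mathbb{I}_k^*$ and $k$ is incremented while $x$ itself remains unchanged. Hence, while $\mathbb{I}_k^*$ is being accumulated, the iterate stays fixed: for each $i\in\mathbb{I}_k^*$ there is an earlier iteration index $k_i \le k$ with $x^{k_i} = x^k$ and $x^{k_i} = J_i(x^{k_i},\beta_{k_i})$. Since Proposition \ref{parada} is valid for any stepsize, each such identity certifies $0 \in A_i(x^k) + B_i(x^k)$. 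When the termination condition $\mathbb{I}_k^* = \mathbb{I}$ is triggered, this holds for all $i\in\mathbb{I}$, and therefore $x^k \in S_*$.

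The only delicate point --- and hardly an obstacle --- is the bookkeeping in Algorithm 2, where different indices may have been certified at different iterations with potentially different stepsizes $\beta_{k_i}$; this is handled cleanly by the stepsize-independence built into Proposition \ref{parada}.
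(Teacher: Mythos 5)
Your proposal is correct and follows essentially the same route as the paper: both reduce the claim to a componentwise application of Proposition \ref{parada}, using $\mathbb{I}_k^*=\mathbb{I}$ to conclude $x^k\in S_*^i$ for every $i$. You are in fact somewhat more careful than the paper's one-line proof, since you explicitly verify that in Algorithm 2 the iterate is not modified while $\mathbb{I}_k^*$ accumulates and that the stepsize-independence of Proposition \ref{parada} covers the differing $\beta_{k_i}$ --- a detail the paper leaves implicit.
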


\begin{proof}
If {\bf Stopping Criteria 1} is satisfied, then $\mathbb{I}_k^*=\mathbb{I}$ for both algorithms, then by Proposition \ref{parada} we have that $x^k \in S_*^i$ for all $i\in \mathbb{I}$ which imply that $x^k\in S_*$.
 \end{proof}

\subsection{Convergence Analysis of {\bf  Algorithm \ref{concep}}}
This subsection is dedicated to prove the convergence of the {\bf Algorithm \ref{concep}}.
\begin{proposition}\label{H-separa-x} $x^k \in H_k$  if and only if, $x^k\in S^*$.
\end{proposition}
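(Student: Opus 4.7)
The plan is to prove the two implications separately, exploiting the inclusion $S_* \subseteq H_k$ for the easy direction and the key estimate from Corollary \ref{coro} for the harder one.

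For the forward direction ($x^k \in S_*$ implies $x^k \in H_k$), I would invoke Lemma \ref{propseq}: for every $i \in \mathbb{I}$ we have $\bar{u}_i^k \in B_i(\bar{x}_i^k)$ by construction (Step 2.5 handles $i \in \mathbb{I}_k^*$, while for $i \notin \mathbb{I}_k^*$ the inclusion $\bar{u}_i^k \in B_i(\bar{x}_i^k)$ follows from the inner loop), so $(\bar{x}_i^k,\bar{u}_i^k) \in \gr(B_i)$ and therefore $S_*^i \subseteq H_i(\bar{x}_i^k, \bar{u}_i^k)$. Since $S_* \subseteq S_*^i$ for every $i$, we get $S_* \subseteq \bigcap_{i \in \mathbb{I}} H_i(\bar{x}_i^k,\bar{u}_i^k) \subseteq H_k$, and in particular $x^k \in H_k$.

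For the reverse direction ($x^k \in H_k$ implies $x^k \in S_*$), I would argue by showing that necessarily $\mathbb{I}_k^* = \mathbb{I}$. Fix any $i \in \mathbb{I} \setminus \mathbb{I}_k^*$; membership $x^k \in H_k \subseteq H_i(\bar{x}_i^k,\bar{u}_i^k)$ gives
\[
\langle A_i(\bar{x}_i^k) + \bar{u}_i^k,\; x^k - \bar{x}_i^k \rangle \le 0.
\]
On the other hand, Corollary \ref{coro} provides
\[
\langle A_i(\bar{x}_i^k) + \bar{u}_i^k,\; x^k - \bar{x}_i^k \rangle \ge \frac{\alpha_{k,i}\delta}{\hat{\beta}}\|x^k - J_i(x^k,\beta_k)\|^2.
\]
Combining the two inequalities and using $\alpha_{k,i} > 0$, $\delta > 0$, $\hat{\beta} < \infty$ forces $x^k = J_i(x^k,\beta_k)$, which by Proposition \ref{parada} means $i \in \mathbb{I}_k^*$, a contradiction. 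Hence $\mathbb{I} \setminus \mathbb{I}_k^* = \emptyset$, so $x^k = J_i(x^k,\beta_k)$ for every $i \in \mathbb{I}$; applying Proposition \ref{parada} componentwise yields $0 \in A_i(x^k) + B_i(x^k)$ for all $i$, i.e., $x^k \in S_*$.

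No serious obstacle is expected: both directions are essentially a bookkeeping combination of Lemma \ref{propseq}, Corollary \ref{coro}, and Proposition \ref{parada}, together with the observation made right after the algorithm that $H_i(\bar{x}_i^k, \bar{u}_i^k) = \RR^n$ whenever $i \in \mathbb{I}_k^*$, so such indices contribute nothing to the intersection $H_k$ and can be ignored in the contradiction argument.
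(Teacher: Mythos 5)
Your proof is correct. The paper itself does not spell out an argument here: it simply declares the statement a ``direct consequence of Proposition 4.2 of \cite{rei-yun} and the definition of $H_k$,'' so you are supplying exactly the content that citation is meant to carry, adapted componentwise to the system. Your forward direction is the intended use of Lemma \ref{propseq} (the verification that $(\bar{x}_i^k,\bar{u}_i^k)\in\gr(B_i)$ in both the $i\in\mathbb{I}_k^*$ and $i\notin\mathbb{I}_k^*$ cases is the right bookkeeping, and the inclusion $\bigcap_{i\in\mathbb{I}}H_i(\bar{x}_i^k,\bar{u}_i^k)\subseteq H_k$ handles the fact that $H_k$ only intersects over $\mathbb{I}\setminus\mathbb{I}_k^*$). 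Your reverse direction — playing the halfspace inequality $\la A_i(\bar{x}_i^k)+\bar{u}_i^k, x^k-\bar{x}_i^k\ra\le 0$ against the Armijo estimate of Corollary \ref{coro} and using $\alpha_{k,i}=\theta^{j_i(k)}>0$ to force $x^k=J_i(x^k,\beta_k)$ — is precisely the single-operator argument of the cited result. The only cosmetic remark: the conclusion $x^k=J_i(x^k,\beta_k)\Rightarrow i\in\mathbb{I}_k^*$ is immediate from the definition of $\mathbb{I}_k^*$ in Step 2 and does not need Proposition \ref{parada}; that proposition is only needed, as you use it at the end, to pass from $x^k=J_i(x^k,\beta_k)$ for all $i$ to $x^k\in S_*$.
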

\begin{proof}
 Direct consequence of Proposition 4.2 of \cite{rei-yun} and the definition of $H_k$.
\end{proof}

Now we prove that the {\bf Stopping Criteria 2} is well defined.
\begin{proposition}\label{stop1}
If {\bf Stop Criteria 2} is satisfied, then $x^k \in S_*$.
\end{proposition}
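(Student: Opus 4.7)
The plan is to combine Fej\'er-type distance estimates (via Proposition \ref{proj}(i)) for the two projections $P_{H_k}$ and $P_X$, together with the fact that any solution in $S_* \cap X$ is a fixed point of both projections (the latter by Lemma \ref{propseq}).

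First I would observe that the iterates stay in $X$: since $x^0\in X$ and $x^{k+1}=P_X(P_{H_k}(x^k))\in X$, every $x^k$ lies in $X$. Next, by Assumption \ref{a2} and the choice of $X$ there exists $x^*\in S_*\cap X$; by Lemma \ref{propseq} this $x^*$ belongs to $H_i(\bar{x}_i^k,\bar{u}_i^k)$ for every $i\in\mathbb{I}\setminus \mathbb{I}_k^*$, so $x^*\in H_k$. In particular $P_{H_k}(x^*)=x^*$ and $P_X(x^*)=x^*$.

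Now I would apply Proposition \ref{proj}(i) to the projection onto $H_k$, with the two arguments $x^k$ and $x^*$, to get
\begin{equation*}
\|P_{H_k}(x^k)-x^*\|^2 \le \|x^k-x^*\|^2 - \|P_{H_k}(x^k)-x^k\|^2.
\end{equation*}
Then I would apply Proposition \ref{proj}(i) again to the projection onto $X$, with arguments $P_{H_k}(x^k)$ and $x^*$, yielding
\begin{equation*}
\|x^{k+1}-x^*\|^2 \le \|P_{H_k}(x^k)-x^*\|^2 - \|x^{k+1}-P_{H_k}(x^k)\|^2.
\end{equation*}
Chaining the two inequalities gives
\begin{equation*}
\|x^{k+1}-x^*\|^2 \le \|x^k-x^*\|^2 - \|P_{H_k}(x^k)-x^k\|^2 - \|x^{k+1}-P_{H_k}(x^k)\|^2.
\end{equation*}

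If the {\bf Stopping Criteria 2} triggers, then $x^{k+1}=x^k$ and the left-hand side equals $\|x^k-x^*\|^2$, so both squared norms on the right must vanish. In particular $P_{H_k}(x^k)=x^k$, i.e.\ $x^k\in H_k$. An appeal to Proposition \ref{H-separa-x} then concludes $x^k\in S_*$.

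The only mildly subtle point is making sure the telescoping is legitimate, which just requires that $x^*$ be a common fixed point of $P_{H_k}$ and $P_X$; this is handled by invoking Lemma \ref{propseq} (for $H_k$) and by the assumption that $X\cap S_*\neq \emptyset$ (for $X$). No continuity or monotonicity of $A_i, B_i$ is needed for this particular proposition, since all the work is already encoded in the construction of the halfspaces $H_i(\bar{x}_i^k,\bar{u}_i^k)$.
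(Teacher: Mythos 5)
Your proof is correct, but it reaches the key identity $P_{H_k}(x^k)=x^k$ by a different route than the paper. You apply the firm-nonexpansiveness estimate of Proposition \ref{proj}(i) twice, once for $P_{H_k}$ and once for $P_X$, anchored at a point $x^*\in S_*\cap X$ which is a common fixed point of both projections (via Lemma \ref{propseq} and the standing assumption $X\cap S_*\neq\emptyset$), and then let the hypothesis $x^{k+1}=x^k$ force the two nonnegative residual terms to vanish. The paper instead uses the variational characterization of Proposition \ref{proj}(ii) for each projection and adds the two inequalities at a common point $z\in X\cap H_k$, so that the cross terms collapse immediately to $\|x^k-P_{H_k}(x^k)\|^2\le 0$. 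Both arguments rely on exactly the same nonemptiness, $X\cap H_k\supseteq S_*\cap X\neq\emptyset$, and both conclude by invoking Proposition \ref{H-separa-x}. Your version has the advantage of recycling the Fej\'er descent inequality \eqref{fejer-des1} that Proposition \ref{prop21} establishes anyway, so no genuinely new computation is needed; the paper's version is marginally leaner in that it works with an arbitrary $z\in X\cap H_k$ rather than a solution and never needs to observe that $x^*$ is a fixed point of the two projections.
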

\proof
If $x^{k+1}=P_X\big(P_{H_k}(x^k)\big)=x^k$, using Proposition \ref{proj}(ii), we have
\begin{equation}\label{proyex}
\la P_{H_k}(x^k)-x^k, z-x^k\ra \leq 0,
\end{equation} for all $z\in X$. Now using Proposition \ref{proj}(ii) again,
\begin{equation}\label{proyeh}
\la P_{H_k}(x^k)-x^k, P_{H_k}(x^k)-z\ra \leq 0,
\end{equation} for all $z\in H_k$.
Since $X\cap H_k \neq \emptyset$ summing \eqref{proyex} and \eqref{proyeh}, with $z\in X\cap H_k$, we get
\begin{equation*}
\|x^k-P_{H_k}(x^k)\|^2=0.
\end{equation*}
Hence, $x^k=P_{H_k}(x^k)$, implying that $x^k\in H_k$ and by Proposition \ref{H-separa-x}, $x^k\in S^*$.
\endproof

From now on assume that {\bf Algorithm 1} generate an infinite sequence $(x^k)_{k\in\NN}$. The next property show some good properties on the sequence generated by {\bf Algorithm 1}.

\begin{proposition}\label{prop21}
\begin{enumerate}
\item The sequence $(x^k)_{k\in \NN}$ is Fej\'er convergent to $S^*\cap X$.
\item The sequence $(x^k)_{k\in \NN}$ is bounded.
\item $\lim_{k\to \infty}\|P_{H_k}(x^k)-x^k\|=0$.
\item $\lim_{k\to \infty}\|x^{x+1}-x^k\|=0$.
\end{enumerate}
\end{proposition}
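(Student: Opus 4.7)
The plan is to prove the four items in order, with item (i) doing most of the work and the remaining items following from a single key inequality produced along the way.

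For (i), I fix $x \in S_* \cap X$ (nonempty by construction of $X$). The first observation I need is that $x \in H_k$: by Lemma \ref{propseq} each halfspace $H_i(\bar x_i^k,\bar u_i^k)$ contains $S_*$ (using that $\bar u_i^k \in B_i(\bar x_i^k)$, which holds for $i\in \mathbb{I}_k^*$ by Step 2.5 and for $i\in\mathbb{I}\setminus\mathbb{I}_k^*$ by the Inner Loop), and $H_k$ is their intersection. Since $x\in X\cap H_k$, I will apply Proposition \ref{proj}(i) with input $x^k$ and the reference point $x\in H_k$ (so $P_{H_k}(x)=x$) to get
\begin{equation*}
\|P_{H_k}(x^k)-x\|^2 \le \|x^k-x\|^2 - \|P_{H_k}(x^k)-x^k\|^2.
\end{equation*}
Applying Proposition \ref{proj}(i) again with $P_X$ and using $P_X(x)=x$ (since $x\in X$) yields $\|x^{k+1}-x\|\le \|P_{H_k}(x^k)-x\|$, and combining these gives
\begin{equation}\label{key-ineq}
\|x^{k+1}-x\|^2 \le \|x^k-x\|^2 - \|P_{H_k}(x^k)-x^k\|^2.
\end{equation}
In particular $\|x^{k+1}-x\|\le \|x^k-x\|$ for every $k$, which is Fej\'er convergence to $S_*\cap X$.

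Item (ii) is then immediate from Proposition \ref{punto}(i). For (iii), I telescope \eqref{key-ineq}: summing from $k=0$ to $N$ gives $\sum_{k=0}^{N}\|P_{H_k}(x^k)-x^k\|^2 \le \|x^0-x\|^2$, so the series converges and the general term tends to $0$. For (iv), I use that $x^k\in X$, so $P_X(x^k)=x^k$, and nonexpansiveness of the projection yields
\begin{equation*}
\|x^{k+1}-x^k\| \;=\; \|P_X(P_{H_k}(x^k))-P_X(x^k)\| \;\le\; \|P_{H_k}(x^k)-x^k\|,
\end{equation*}
which tends to $0$ by (iii).

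There is no real obstacle here beyond verifying the halfspace inclusion $S_*\subseteq H_k$ carefully, in particular handling the indices $i\in \mathbb{I}_k^*$ for which Step 2.5 sets $\bar u_i^k = -A_i(x^k)$; Proposition \ref{parada} gives $-A_i(x^k)\in B_i(x^k)$ in that case, so Lemma \ref{propseq} applies uniformly. Once this is in place, the rest is the standard Fej\'er-plus-telescoping argument, driven by the two inequalities of Proposition \ref{proj}(i).
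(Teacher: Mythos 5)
Your proof is correct and follows essentially the same route as the paper: the same key inequality $\|x^{k+1}-x\|^2\le\|x^k-x\|^2-\|P_{H_k}(x^k)-x^k\|^2$, obtained from Lemma \ref{propseq} (so that $x\in H_k$) together with Proposition \ref{proj}(i), with items (ii)--(iv) then read off from it exactly as in the paper. The only cosmetic difference is in (iii), where you telescope the inequality to get summability, while the paper instead invokes the convergence of $(\|x^k-x\|)_{k\in\NN}$ from Proposition \ref{punto}(ii); both yield the same conclusion.
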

\begin{proof}
(i) Take $x^*\in S^*\cap X$.  Using  Proposition \ref{proj}(i) and Lemma \ref{propseq}, we have
\begin{eqnarray}\label{fejer-des1}\nonumber\|x^{k+1}-x^{*}\|^2&=&\|P_{X}(P_{H_k}(x^k))-P_{X}(P_{H_k}(x^{*}))\|^2\le \|P_{H_k}(x^k)-P_{H_k}(x^{*})\|^2\\&\leq& \|x^k-x^*\|^2-\|P_{H_k}(x^k)-x^k\|^2.\end{eqnarray} So, $\|x^{k+1}-x^{*}\|\le \|x^k-x^*\|$.

(ii) Follows immediately from item (i) and Proposition \ref{punto}(i).

(iii)Take $x^* \in S^*\cap X$. Using \eqref{fejer-des1} yields
\begin{equation}\label{ineq1}
\|P_{H_k}(x^k)-x^k\|^2\le \|x^k-x^*\|^2-\|x^{k+1}-x^{*}\|^2.
\end{equation}
Now using Proposition \ref{punto}(ii) and item (ii) we have that the right side of equation \eqref{ineq1} goes to zero. Obtaining the result.

(iv) Since the sequence $\{x^k\}_{k\in \NN}$ belongs to $X$, we have
$$\|x^{k+1}-x^k\|^2=\|P_{X}(P_{H_k}(x^k))-P_{X}(x^k)\|^2\le \|P_{H_k}(x^k)-x^k\|^2.$$
Taking limits in the above equation and using the previous item we have the result.
\end{proof}

The following proposition gives us an important behavior of the sequences involved in the algorithm.
\begin{proposition}\label{cadai}
For all $i\in \mathbb{I}$ we have,
$$\lim_{k\to \infty}\la A_i(\bar{x}_i^k)+\bar{u}_i^k,x^k-\bar{x}_i^k \ra=0.$$
\end{proposition}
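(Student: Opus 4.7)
The plan is to sandwich the inner product $\la A_i(\bar{x}_i^k)+\bar{u}_i^k, x^k-\bar{x}_i^k\ra$ between two quantities that both tend to zero. Corollary~\ref{coro} already supplies the lower bound
\[
\la A_i(\bar{x}_i^{k})+\bar{u}_i^{k},x^{k}-\bar{x}_i^{k}\ra \;\ge\; \frac{\alpha_{k,i}\delta}{\hat{\beta}}\|x^{k}-J_i(x^{k},\beta_{k})\|^2 \;\ge\; 0,
\]
so the whole job is to establish a matching upper bound that vanishes. For this I will exploit the fact that $P_{H_k}(x^k)\in H_k\subseteq H_i(\bar{x}_i^k,\bar{u}_i^k)$, which by the definition \eqref{H(x)} of the halfspace yields $\la A_i(\bar{x}_i^k)+\bar{u}_i^k, P_{H_k}(x^k)-\bar{x}_i^k\ra\le 0$. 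Adding and subtracting $P_{H_k}(x^k)$ then gives
\[
\la A_i(\bar{x}_i^k)+\bar{u}_i^k, x^k-\bar{x}_i^k\ra \;\le\; \la A_i(\bar{x}_i^k)+\bar{u}_i^k,\, x^k-P_{H_k}(x^k)\ra \;\le\; \|A_i(\bar{x}_i^k)+\bar{u}_i^k\|\cdot\|x^k-P_{H_k}(x^k)\|
\]
by Cauchy--Schwarz. By Proposition~\ref{prop21}(iii) the factor $\|x^k-P_{H_k}(x^k)\|$ tends to zero, so the task reduces to showing that $\|A_i(\bar{x}_i^k)+\bar{u}_i^k\|$ stays bounded.

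Boundedness of $\bar{u}_i^k$ is free: the Inner Loop picks $\bar{u}_i^k\in B[0,R]$ by Assumption \ref{a3} applied to $V=X$. For the first term I will argue that $(\bar{x}_i^k)_{\kkk}$ itself is bounded. Since $(x^k)\subset X$ is bounded (Proposition~\ref{prop21}(ii)) and $\bar{x}_i^k=\alpha_{k,i}J_i(x^k,\beta_k)+(1-\alpha_{k,i})x^k$, it suffices to bound $J_i(x^k,\beta_k)$. Pick any $x^*\in S_*\cap X$ so that $-A_i(x^*)\in B_i(x^*)$; from \eqref{jota1} one has $\beta_k^{-1}(x^k-J_i(x^k,\beta_k))-A_i(x^k)\in B_i(J_i(x^k,\beta_k))$, and monotonicity of $B_i$ against $(x^*,-A_i(x^*))$ gives after rearranging
\[
\|J_i(x^k,\beta_k)-x^*\|\;\le\;\|x^k-x^*\|+\hat{\beta}\|A_i(x^k)-A_i(x^*)\|.
\]
Continuity of $A_i$ on $\dom A_i\supseteq X$ (Assumption \ref{a4}) and boundedness of $(x^k)$ make the right-hand side bounded, hence $(J_i(x^k,\beta_k))_k$ and thus $(\bar{x}_i^k)_k$ is bounded. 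Applying continuity of $A_i$ once more to the bounded sequence $(\bar{x}_i^k)$ yields a uniform bound on $\|A_i(\bar{x}_i^k)\|$, and combined with $\|\bar{u}_i^k\|\le R$ we obtain the desired uniform bound on $\|A_i(\bar{x}_i^k)+\bar{u}_i^k\|$.

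Putting the pieces together, the upper estimate gives $\limsup_{k\to\infty}\la A_i(\bar{x}_i^k)+\bar{u}_i^k,x^k-\bar{x}_i^k\ra\le 0$, while Corollary~\ref{coro} gives $\liminf\ge 0$, so the limit is $0$.

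The most delicate step will be the boundedness argument for $J_i(x^k,\beta_k)$ and the implicit use that $\bar{x}_i^k$ (a convex combination of points in $\dom B_i\subseteq\dom A_i$) indeed lies in the domain of $A_i$ where continuity is available; this is where Assumptions \ref{a1} and \ref{a4} do the real work, and where one has to be careful that the continuity of $A_i$ is invoked only on a bounded closed subset of $\dom A_i$.
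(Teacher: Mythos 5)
Your proof is correct and follows essentially the same route as the paper: the paper derives your upper bound $\la A_i(\bar{x}_i^k)+\bar{u}_i^k,x^k-\bar{x}_i^k\ra \le \|A_i(\bar{x}_i^k)+\bar{u}_i^k\|\,\|x^k-P_{H_k}(x^k)\|$ from the explicit formula for the projection onto the halfspace $H_i(\bar{x}_i^k,\bar{u}_i^k)$ combined with the inclusion $H_k\subseteq H_i(\bar{x}_i^k,\bar{u}_i^k)$, and then invokes Proposition~\ref{prop21}(iii) together with the boundedness of $(\|A_i(\bar{x}_i^k)+\bar{u}_i^k\|)_{k\in\NN}$ exactly as you do. Your only deviations are cosmetic: Cauchy--Schwarz in place of the explicit projection formula, and a direct monotonicity estimate for $\|J_i(x^k,\beta_k)-x^*\|$ in place of the paper's appeal to the continuity of $J_i$ when establishing that $(\bar{x}_i^k)_{k\in\NN}$ is bounded.
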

\begin{proof}
For all $i\in \mathbb{I}$. Using the fact that $H_k \subseteq H_i(\bar{x}_i^k,\bar{u}_i^k)$ by \eqref{hk} and {\bf Step 2.5}, we have that,
$$\|P_{H_i(\bar{x}_i^k,\bar{u}_i^k)}(x^k)-x^k\|^2\leq \|P_{H_k}(x^k)-x^k\|^2.$$

Using the fact that,
$$P_{H_i(\bar{x}_i^k,\bar{u}_i^k)}(x^k)=x^k-\frac{\la A_i(\bar{x}_i^k)+\bar{u}_i^k,x^k-\bar{x}_i^k  \ra}{\|A_i(\bar{x}_i^k)+\bar{u}_i^k\|^2}(A_i(\bar{x}_i^k)+\bar{u}_i^k),$$
and the previous equation, we have
\begin{equation}\label{pasar al lim1}
\frac{\big(\la A_i(\bar{x}_i^k)+\bar{u}_i^k,x^k-\bar{x}_i^k  \ra\big)^2}{\|A_i(\bar{x}_i^k)+\bar{u}_i^k\|^2}\leq \|P_{H_k}(x^k)-x^k\|^2.
\end{equation}
By Proposition \ref{inversa} and the continuity of $A_i$ we have that $J_i$ is continuous, since $(x^k)_{k\in \NN}$ and $(\beta_k)_{k\in \NN}$ are bounded then $(J_i(x^k,\beta_k))_{k\in \NN}$ and $(\bar{x}_i^k)_{k\in \NN}$ are bounded. This implies the boundedness of $(\|A_i(\bar{x}_i^k)+\bar{u}_i^k\|)_{k\in \NN}$ for all $i\in \mathbb{I}$.

 Using Proposition \ref{prop21}(iii), the right side of \eqref{pasar al lim1} goes to 0 when $k$ goes to $\infty$, establishing the result.
\end{proof}

 Next we establish our main convergence result for {\bf Algorithm 1}.
\begin{theorem}\label{teo1}
The sequence $(x^k)_{k\in \NN}$ converges to some element belonging to $S_*\cap X $.
 \end{theorem}
\begin{proof}
 We claim that there exists a cluster point of $(x^k)_{k\in \NN}$ belonging to $S_*$. The existence of the cluster points follows from Proposition \ref{prop21}(ii). Let $(x^{j_k})_{k\in \NN}$ be a convergent subsequence of $(x^k)_{k\in \NN}$ such that, for all $i\in \mathbb{I}$ the sequences  $(\bar{x}_i^{j_k})_{k\in \NN}, (\bar{u}_i^{j_k})_{k\in \NN}, (\alpha_{j_k,i})_{k\in \NN}$ and $(\beta_{j_k})_{k\in \NN}$ are convergent, and $\lim _{k\to \infty}x^{j_k}= \tilde{x}$.\\
Using Proposition \ref{prop21}(iii) and taking limits in \eqref{desig-muy-usada} over the subsequence $(j_k)_{k\in \NN}$, we have for all $i\in \mathbb{I}$,
\begin{equation}\label{limite1}
0=\lim_{k\to \infty}\la A_i(\bar{x}_i^{j_k})+\bar{u}_i^{j_k},x^{j_k}-\bar{x}_i^{j_k} \ra \ge \lim_{k\to \infty}  \frac{\alpha_{j_k,i}\delta}{\hat{\beta}}\|x^{j_k}-J_i(x^{j_k},\beta_{j_k})\|^2\geq0.
\end{equation}
Therefore,
\begin{equation*}
\lim_{k\to \infty} \alpha_{j_k,i}\|x^{j_k}-J_i(x^{j_k},\beta_{j_k})\|=0.
\end{equation*}
Now consider the two possible cases.

(a) First, assume that $\lim_{k\to \infty}\alpha_{j_k,i}\ne 0$, i.e., $\alpha_{j_k,i}\ge \bar{\alpha}$ for all $k$ and some  $\bar{\alpha}>0$. In view of (\ref{limite1}),

    \begin{equation}\label{limcero1}
    \lim_{k\to \infty}\|x^{j_k}-J_i(x^{j_k},\beta_{j_k})\|=0.
    \end{equation}
    Since $J_i$ is continuous, by continuity  of $A_i$ and $(I+\beta_k B_i)^{-1}$ and by Proposition \ref{inversa}, \eqref{limcero1} becomes
    \begin{equation*}
    \tilde{x}=J_i(\tilde{x},\tilde{\beta}),
    \end{equation*}
    which implies that $\tilde{x}\in S_i^*$ for all $i\in \mathbb{I}$. Then $\tilde{x}\in S_*$ establishing the claim.

 (b) On the other hand, if $\lim_{k\to \infty}\alpha_{j_k,i}=0$ then for $\theta \in (0,1)$ as in the {\bf Algorithm 1}, we have
    $$\lim_{k\to\infty}\frac{\alpha_{j_k,i}}{\theta}=0.$$
    Define
    $$y^{j_k}_i:=\frac{\alpha_{j_k,i}}{\theta}J_i(x^{j_k},\beta_{j_k})+\Big(1-\frac{\alpha_{j_k,i}}{\theta}\Big)x^{j_k}.$$
    Then,
    \begin{equation}\label{ykgox1}
    \lim_{k\to\infty}y_i^{j_k}=\tilde{x}.
    \end{equation}
    Using the definition of the $j_i(k)$ and \eqref{alphak}, we have that $y_i^{j_k}$ does not satisfy \eqref{jk1} implying
    \begin{equation*}
    \Big \la A_i (y^{j_k}_i)+u^{j_k}_{j_i(k)-1}-\frac{\delta}{\beta_k}(x^k -J_i\big(x^k,\beta_k)\big), x^k-J_i(x^k,\beta_k)\Big \ra > 0,
    \end{equation*}
    equivalent to
    \begin{equation}\label{conse1}
\Big \la A_i (y^{j_k}_i)+u^{j_k}_{j(j_k)-1,i}, x^k-J_i(x^k,\beta_k)\Big \ra > \frac{\delta}{\beta_k}\|x^k-J_i\big(x^k,\beta_k\big)\|^2,
    \end{equation}
    for $u^{j_k}_{j(j_k)-1,i}\in B_i(y^{j_k}_i)$ and all $k\in \NN$ and $i\in \mathbb{I}$.\\
    Redefining the subsequence $(j_k)_{k\in \NN}$, if necessary, we may assume that  $(u^{j_k}_{j(j_k)-1,i})_{k\in \NN}$ converges to $\tilde{u}_i$. By the maximality of
$B_i$, $\tilde{u}_i$ belongs to $B_i(\tilde{x})$. Using the continuity of $J_i$, $(J(x^{j_k},\beta_{j_k}))_{k\in \NN}$ converges to $J_i(\tilde{x},\tilde{\beta})$. Using
\eqref{ykgox1} and taking limit in (\ref{conse1}) over the subsequence $(j_k)_{k\in \NN}$ we have
    \begin{equation}\label{tiu1}
    \Big\la A_i(\tilde{x}) + \tilde{u}_i, \tilde{x}- J_i(\tilde{x},\tilde{\beta})\Big\ra \le \frac{\delta}{\tilde{\beta}}\|\tilde{x}-J_i(\tilde{x},\tilde{\beta})\|^2.
    \end{equation}
    Using (\ref{jota1}) and multiplying by $\tilde{\beta}$ on both sides of (\ref{tiu1}) we get
    \begin{equation*}
    \la \tilde{x}-J_i(\tilde{x},\tilde{\beta})-\tilde{\beta}\tilde{v}_i+\tilde{\beta}\tilde{u}_i, \tilde{x}-J_i(\tilde{x},\tilde{\beta})\ra \le \delta\| \tilde{x}-J_i(\tilde{x},\tilde{\beta})\|^2,
    \end{equation*}
    where $\tilde{v}_i\in B_i(J_i(\tilde{x},\tilde\beta))$. Applying the monotonicity of $B_i$, we obtain
    $$\| \tilde{x}-J_i(\tilde{x},\tilde{\beta})\|^2 \le \delta \| \tilde{x}-J_i(\tilde{x},\tilde{\beta})\|^2,$$
    implying that $\| \tilde{x}-J_i(\tilde{x},\tilde{\beta})\|\le 0$. Thus, $\tilde{x}=J_i(\tilde{x},\tilde{\beta})$ and hence, $\tilde{x}\in S_*^i$ for all $i\in \mathbb{I}$, thus $\tilde{x} \in S_*$.

\end{proof}

\subsection{Convergence Analysis of Algorithm \ref{concep1}}
In this subsection we prove the convergence analysis of {\bf Algorithm \ref{concep1}}.
\begin{proposition}\label{prop2}
\begin{enumerate}
\item The sequence $(x^k)_{k\in \NN}$ is Fej\'er convergent to $S_*\cap X$.
\item The sequence $(x^k)_{k\in \NN}$ is bounded.
\item $\lim_{k\to \infty}\|x^{x+1}-x^k\|=0$.
\end{enumerate}
\end{proposition}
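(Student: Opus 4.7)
The plan is to transcribe the proof of Proposition \ref{prop21} almost verbatim, with the single halfspace $H_{\rho(k)}(\bar{x}^k,\bar{u}^k)$ playing the role of the intersection $H_k$. The only structural reason this works is Lemma \ref{propseq}: any $x^*\in S_*$ lies in $S_*^{\rho(k)}$ in particular, so $x^*\in H_{\rho(k)}(\bar{x}^k,\bar{u}^k)$, exactly the property that drove the Algorithm 1 proof. The iterations on which the Stopping Criteria 1 branch fires (i.e.\ $x^k=J_{\rho(k)}^k$ but $\mathbb{I}_k^*\neq\mathbb{I}$) satisfy $x^{k+1}=x^k$, so they are harmless for all three items.

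For (i) I would fix $x^*\in S_*\cap X$. Since $x^*\in X$, nonexpansiveness of $P_X$ (Proposition \ref{proj}(i) with $y=x^*$) gives $\|x^{k+1}-x^*\|\le\|P_{H_{\rho(k)}(\bar{x}^k,\bar{u}^k)}(x^k)-x^*\|$. Applying Proposition \ref{proj}(i) once more to the halfspace projection and invoking Lemma \ref{propseq} to ensure $x^*\in H_{\rho(k)}(\bar{x}^k,\bar{u}^k)$, I obtain the key Fej\'er inequality
\begin{equation}\label{plan-fejer-alg2}
\|x^{k+1}-x^*\|^2 \le \|x^k-x^*\|^2 - \|P_{H_{\rho(k)}(\bar{x}^k,\bar{u}^k)}(x^k)-x^k\|^2,
\end{equation}
which gives Fej\'er convergence of $(x^k)$ to $S_*\cap X$; on the skipped iterations the inequality reduces to equality. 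Item (ii) then follows immediately from Proposition \ref{punto}(i).

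For (iii), Proposition \ref{punto}(ii) shows that $(\|x^k-x^*\|)_{k\in\NN}$ is convergent, hence the right-hand side of \eqref{plan-fejer-alg2} tends to zero, so $\|P_{H_{\rho(k)}(\bar{x}^k,\bar{u}^k)}(x^k)-x^k\|\to 0$. Because $x^k\in X$ yields $P_X(x^k)=x^k$ and $P_X$ is nonexpansive,
$$\|x^{k+1}-x^k\|=\|P_X(P_{H_{\rho(k)}(\bar{x}^k,\bar{u}^k)}(x^k))-P_X(x^k)\|\le\|P_{H_{\rho(k)}(\bar{x}^k,\bar{u}^k)}(x^k)-x^k\|\longrightarrow 0,$$
with the bound trivial on the skipped iterations. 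I do not anticipate a genuine obstacle here; the only point needing a little care is the bookkeeping for iterations at which no halfspace is built, but those leave $x^k$ unchanged and hence cannot disturb any of the estimates.
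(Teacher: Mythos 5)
Your proposal is correct and follows essentially the same route as the paper: both establish the Fej\'er inequality $\|x^{k+1}-x^*\|^2\le\|x^k-x^*\|^2-\|P_{H_{\rho(k)}(\bar{x}^k,\bar{u}^k)}(x^k)-x^k\|^2$ via the firm nonexpansiveness of the two projections (Proposition \ref{proj}(i)) together with Lemma \ref{propseq}, then deduce boundedness from Proposition \ref{punto}(i) and item (iii) from the convergence of $(\|x^k-x^*\|)_{k\in\NN}$ and the nonexpansiveness of $P_X$. Your explicit bookkeeping for the iterations on which Stopping Criteria 1 fires is a small addition the paper leaves implicit, but it does not change the argument.
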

\begin{proof}
\begin{enumerate}
\item Take $x^*\in S_*\cap X$.  Using \eqref{Fk}, Proposition \ref{proj}(i) and Lemma \ref{propseq}, we have
\begin{eqnarray}\label{fejer-des}\nonumber\|x^{k+1}-x^{*}\|^2&=& \|P_{X}(P_{H_{\rho(k)}(\bar{x}^k,\bar{u}^k)}(x^k))-P_X(P_{H_{\rho(k)}(\bar{x}^k,\bar{u}^k)}(x^{*}))\|^2\\\nonumber &\leq& \|P_{H_{\rho(k)}(\bar{x}^k,\bar{u}^k)}(x^k))-P_{H_{\rho(k)}(\bar{x}^k,\bar{u}^k)}(x^{*}) \|^2 \\ &\leq& \|x^k-x^*\|^2-\|P_{H_{\rho(k)}(\bar{x}^k,\bar{u}^k)}(x^k)-x^k\|^2 .
\end{eqnarray}
So, $\|x^{k+1}-x^{*}\|\leq \|x^k-x^*\|$.

\item Follows immediately from item (i) and Proposition \ref{punto}(i).

\item Using that the sequence $(x^k)_{k\in\NN}\subset X$ by \eqref{Fk}, we have
$$ \|x^{k+1}-x^k\|^2=\|P_X\big(P_{H_{\rho(k)}(\bar{x}^k,\bar{u}^k)}(x^k)\big)-x^k\|^2\leq \|P_{H_{\rho(k)}(\bar{x}^k,\bar{u}^k)}(x^k)-x^k\|^2.$$
Now, using \eqref{fejer-des} we obtain,
\begin{equation}\label{zero12}\|x^{k+1}-x^k\|^2\leq\|x^k-x^*\|^2-\|x^{k+1}-x^*\|^2 .\end{equation}
Taking limit in \eqref{zero12}, using Proposition \ref{punto}(ii) and item (i), we get the result.
\end{enumerate}
\end{proof}
\begin{proposition}\label{cadai}
For the sequences $(\bar{u}^k)_{k\in\NN}, \ (\bar{x}^k)_{k\in\NN}$ and  $(x^k)_{k\in\NN}$ generated by {\bf Algorithm 2} holds
$$\lim_{k\to \infty}\la A_{\rho(k)}(\bar{x}^k)+\bar{u}^k,x^k-\bar{x}^k \ra=0.$$
\end{proposition}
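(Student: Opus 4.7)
The plan is to mimic the proof of the Algorithm~1 analog (Proposition~\ref{cadai}) almost verbatim, exploiting the fact that at each iteration of Algorithm~2 we project onto a single halfspace $H_{\rho(k)}(\bar{x}^k,\bar{u}^k)$, so the separating halfspace itself is exactly the set whose closeness to $x^k$ we can control through Fej\'er monotonicity, rather than being a subset of it. First I would recall the explicit formula for the projection onto an affine halfspace: writing $d^k := A_{\rho(k)}(\bar{x}^k)+\bar{u}^k$, whenever the inner product is positive we have
\begin{equation*}
P_{H_{\rho(k)}(\bar{x}^k,\bar{u}^k)}(x^k)=x^k-\frac{\la d^k, x^k-\bar{x}^k\ra}{\|d^k\|^2}\,d^k,
\end{equation*}
so that
\begin{equation*}
\frac{\bigl(\la A_{\rho(k)}(\bar{x}^k)+\bar{u}^k,\; x^k-\bar{x}^k\ra\bigr)^2}{\|A_{\rho(k)}(\bar{x}^k)+\bar{u}^k\|^2}\;\le\;\|P_{H_{\rho(k)}(\bar{x}^k,\bar{u}^k)}(x^k)-x^k\|^2.
\end{equation*}
When the inner product is $0$ the conclusion is trivial, and by Corollary~\ref{coro} (applied with $i=\rho(k)$) it is nonnegative.

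Next I would send the right-hand side to zero. From inequality \eqref{fejer-des} in the proof of Proposition~\ref{prop2}, rearranged,
\begin{equation*}
\|P_{H_{\rho(k)}(\bar{x}^k,\bar{u}^k)}(x^k)-x^k\|^2\;\le\;\|x^k-x^*\|^2-\|x^{k+1}-x^*\|^2,
\end{equation*}
for any fixed $x^*\in S_*\cap X$. Since $(x^k)_{\knn}$ is Fej\'er convergent to $S_*\cap X$ by Proposition~\ref{prop2}(i) and Proposition~\ref{punto}(ii) guarantees that $(\|x^k-x^*\|)_{\knn}$ converges, the telescoping right-hand side tends to zero.

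Finally I would show that the denominator $\|A_{\rho(k)}(\bar{x}^k)+\bar{u}^k\|$ stays bounded away from $\infty$, so that the vanishing of the left-hand side forces the inner product itself to vanish. By Proposition~\ref{prop2}(ii) the sequence $(x^k)_{\knn}$ is bounded, $(\beta_k)_{\knn}\subseteq[\check{\beta},\hat{\beta}]$ is bounded, and by Proposition~\ref{inversa} together with Assumption~\ref{a4} each resolvent $J_i(\cdot,\cdot)$ is continuous, so the sequence $(J_{\rho(k)}^k)_{\knn}$ is bounded. Then \eqref{xbar} gives $(\bar{x}^k)_{\knn}$ bounded. Because $\mathbb{I}$ is finite and every $A_i$ is continuous on its domain (which contains the bounded set covering the iterates), $(A_{\rho(k)}(\bar{x}^k))_{\knn}$ is bounded, and $\bar{u}^k\in B[0,R]$ by construction of the inner loop. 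Thus $(\|A_{\rho(k)}(\bar{x}^k)+\bar{u}^k\|)_{\knn}$ is bounded, and combining this with the two displays above yields $\la A_{\rho(k)}(\bar{x}^k)+\bar{u}^k, x^k-\bar{x}^k\ra\to 0$.

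The only delicate point, which I expect to be the main obstacle to write cleanly rather than mathematically, is handling the varying index $\rho(k)$ when arguing boundedness of $\|A_{\rho(k)}(\bar{x}^k)+\bar{u}^k\|$: one must invoke continuity of the finitely many operators $A_i$ uniformly on a common bounded set containing $\{\bar{x}^k\}$, which is fine since $\mathbb{I}$ is finite but needs to be spelled out explicitly so that the bound does not depend on $k$.
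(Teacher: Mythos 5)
Your proposal is correct and follows essentially the same route as the paper's own proof: rearrange the Fej\'er inequality \eqref{fejer-des} to bound $\|P_{H_{\rho(k)}(\bar{x}^k,\bar{u}^k)}(x^k)-x^k\|^2$ by a telescoping difference, substitute the explicit halfspace-projection formula, and conclude by showing $(\|A_{\rho(k)}(\bar{x}^k)+\bar{u}^k\|)_{k\in\NN}$ is bounded. Your extra care about the degenerate case where the inner product vanishes and about taking the bound uniformly over the finite index set $\mathbb{I}$ is a welcome tightening of details the paper leaves implicit, but it does not change the argument.
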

\begin{proof}

Reordering \eqref{fejer-des},  for all $x^*\in S_*\cap X$ we get
$$\|P_{H_{\rho(k)}(\bar{x}^k,\bar{u}^k)}(x^k)-x^k\|^2 \leq |x^k-x^*\|^2-\|x^{k+1}-x^*\|^2.$$
Using the fact that
$$P_{H_{\rho(k)}(\bar{x}^k,\bar{u}^k)}(x^k)=x^k-\frac{\la A_{\rho(k)}(\bar{x}^k)+\bar{u}^k,x^k-\bar{x}^k  \ra}{\|A_{\rho(k)}(\bar{x}^k)+\bar{u}^k\|^2}(A_{\rho(k)}(\bar{x}^k)+\bar{u}^k),$$
and the previous equation, we have
\begin{equation}\label{pasar al lim}
\frac{(\la A_{\rho(k)}(\bar{x}^k)+\bar{u}^k,x^k-\bar{x}^k  \ra)^2}{\|A_{\rho(k)}(\bar{x}^k)+\bar{u}^k\|^2}\leq \|x^k-x^*\|^2-\|x^{k+1}-x^*\|^2.
\end{equation}
By Proposition \ref{inversa} and the continuity of $A_{\rho(k)}$ we have that $J_{\rho(k)}$ is continuo. Since $(x^k)_{k\in \NN}$ and $(\beta_k)_{k\in \NN}$ are bounded we get that $(\bar{x}^k)_{k\in \NN}$ is bounded, implying the boundedness of $(\|A_{\rho(k)}(\bar{x}^k)+\bar{u}^k\|)_{k\in \NN}$.
 As we had seen by Proposition \ref{zero12} and Proposition \ref{prop2}(i), the right side of \eqref{pasar al lim} goes to 0 when $k$ goes to $\infty$, establishing the result.
\end{proof}

 Next we establish our main convergence result for {\bf Algorithm 2}.
\begin{theorem}\label{teo1}
The sequence $(x^k)_{k\in \NN}$ converges to some element belonging to $S_*\cap X $.
 \end{theorem}
\begin{proof}
 Since  $(x^k)_{k\in \NN}$ is bounded then have cluster points, as every $x^k$ belong to $X$ by \eqref{Fk} and $X$ is closed, then all clusters point of $(x^k)_{k\in\NN}$ belong to $X$.

 It is sufficient to proof that at least one cluster point of the sequence $(x^k)_{k\in\NN}$ belongs to the solution set. Applying Proposition \ref{punto}(iii) and Proposition \ref{prop2}(i) we will get that the whole sequence is convergent to the solution set. Note that due to the property of periodicity and surjective (as was defined) of the function $\rho:\NN\rightarrow \mathbb{I}$, there exist subsequences $(n^i_k)_{k\in\NN, i \in \mathbb{I}}$ such that $|n^i_k-n^j_k| \leq m$  for all $k\in\NN$ and all $i,j\in \mathbb{I}$ and $\rho(n^i_k)=i$ for all $i\in\mathbb{I}$. Using Proposition \ref{prop2}(iii) it is easy to proof that $\lim_{k\rightarrow \infty}\|x^{n^i_k}-x^{n^j_k}\|^2 = 0$, for all $i,j\in \mathbb{I}$. This result imply that all subsequences $(x^{n^i_k})_{k\in\NN}$ for $ i\in\mathbb{I}$, have the same cluster points.
Now we can take subsequences of $(x^{n^i_{l_k}})$  such that $x^{n^i_{l_k}} \rightarrow \tilde{x}$ for all $i \in \mathbb{I}$. This subsequence can be chosen such that $(\alpha_{n_{l_k}})_{k\in\NN}$ and $(\beta_{n_{l_k}})_{k\in \NN}$ be convergent and let $\lim_{k\to\infty}\beta_{{n_{l_k}}}=\tilde{\beta}$.

Rewriting \eqref{desig-muy-usada} for the case of {\bf Algorithm 2}, we have
\begin{equation}\label{usada}
\la A_{\rho(k)}(\bar{x}^{k})+\bar{u}^{k},x^{k}-\bar{x}^{k} \ra \ge \frac{\alpha_{k}\delta}{\hat{\beta}}\|x^{k}-J_{\rho(k)}(x^k,\beta_k)\|^2.
\end{equation}
Using Proposition \ref{cadai} and taking limits in \eqref{usada} over the subsequences $(n^i_{l_k})_{k\in \NN}$, we have for all $i\in \mathbb{I}$,

\begin{equation}\label{limite}
0=\lim_{k\to \infty}\la A_i(\bar{x}^{n_{l_k}})+\bar{u}^{n_{l_k}},x^{n_{l_k}}-\bar{x}^{n_{l_k}} \ra \ge \lim_{k\to \infty}  \frac{\alpha_{n_{l_k}}\delta}{\hat{\beta}}\|x^{n_{l_k}}-J_i(x^{n_{l_k}},\beta_{n_{l_k}})\|^2\geq 0.
\end{equation}
Therefore,
\begin{equation*}
\lim_{k\to \infty}{\alpha_{n_{l_k}}\|x^{n_{l_k}}-J_i(x^{n_{l_k}}},\beta_{n_{l_k}})\|^2=0.
\end{equation*}
Now consider the two possible cases.

(a) First, assume that $\lim_{k\to \infty}\alpha_{n_{l_k}}\ne 0$, i.e., $\alpha_{n_{l_k}}\ge \bar{\alpha}$ for all $k$ and some  $\bar{\alpha}>0$. In view of (\ref{limite}),

    \begin{equation}\label{limcero}
    \lim_{k\to \infty} \|x^{n_{l_k}}-J_i(x^{n_{l_k}},\beta_{n_{l_k}})\| =0.
    \end{equation}
    Since $J_i$ is continuous, by continuity of $A_i$ and $(I+\beta_k B_i)^{-1}$  by Proposition \ref{inversa}, \eqref{limcero} becomes
    \begin{equation*}
    \tilde{x}=J_i(\tilde{x},\tilde{\beta})=(I+\tilde{\beta} B_i)^{-1}(I-\tilde{\beta} A_i)(\tilde{x}),
    \end{equation*}
    which implies that $\tilde{x}\in S_*^i$ for all $i\in \mathbb{I}$ using Proposition \ref{parada}. Then $\tilde{x}\in S_*$ establishing the claim.

 (b) On the other hand, if $\lim_{k\to \infty}\alpha_{_{l_k}}=0$ then for $\theta \in (0,1)$ as in {\bf Algorithm 2}, we have
    $$\lim_{k\to\infty}\frac{\alpha_{n_{l_k}}}{\theta}=0.$$
    Define
    $$y^{n_{l_k}}_i:=\frac{\alpha_{n_{l_k}}}{\theta}J_i(x^{n_{l_k}},\beta_{n_{l_k}})+\Big(1-\frac{\alpha_{n_{l_k}}}{\theta}\Big)x^{n_{l_k}},$$
    then,
    \begin{equation}\label{ykgox}
    \lim_{k\to\infty}y_i^{n_{l_k}}=\tilde{x}.
    \end{equation}
    Using the definition of the $j(k)$ and \eqref{alphak}, we have that $y_i^{j_k}$ does not satisfy \eqref{jk} implying
    \begin{equation}\label{conse}
    \Big \la A_i (y^{n_{l_k}}_i)+u^{n_{l_k}}_{j(n_{l_k})-1,i}, x^{n_{l_k}}-J_i(x^{n_{l_k}},\beta_{n_{l_k}})\Big \ra <\frac{\delta}{\beta_{n_{l_k}}}\|x^{n_{l_k}} -J_i(x^{n_{l_k}},\beta_{n_{l_k}})\|^2,
    \end{equation}

    for $u^{n_{l_k}}_{j(n_{l_k})-1,i}\in B_i(y^{n_{l_k}}_i)$ and all $k\in \NN$ and $i\in \mathbb{I}$.\\
    Redefining the subsequence $(n_{l_k})_{k\in \NN}$, if necessary, we can assume that  $(u^{n_{l_k}}_{j(n_{l_k})-1,i})_{k\in \NN}$ converges to $\tilde{u}_i$, due to the maximality of $B_i$ we obtain that $\tilde{u}_i$ belongs to $B_i(\tilde{x})$. Using the continuity of $J_i$,  \eqref{ykgox} and taking limit in (\ref{conse}) over the subsequence $(n_{l_k})_{k\in \NN}$ we have
    \begin{equation}\label{tiu}
    \Big\la A_i(\tilde{x}) + \tilde{u}_i, \tilde{x}- J_i(\tilde{x},\tilde{\beta})\Big\ra \le \frac{\delta}{\tilde{\beta}}\|\tilde{x}-J_i(\tilde{x},\tilde{\beta})\|^2.
    \end{equation}
    Using the definition of $J_i(\tilde{x},\tilde{\beta}):=(I+\tilde{\beta} B_i)^{-1}(I-\tilde{\beta} A_i)(\tilde{x})$ and multiplying by $\tilde{\beta}$ on both sides of (\ref{tiu}), we get
    \begin{equation*}
    \la \tilde{x}-J_i(\tilde{x},\tilde{\beta})-\tilde{\beta}\tilde{v}_i+\tilde{\beta}\tilde{u}_i, \tilde{x}-J_i(\tilde{x},\tilde{\beta})\ra \le \delta\| \tilde{x}-J_i(\tilde{x},\tilde{\beta})\|^2,
    \end{equation*}
    where $\tilde{v}_i\in B_i(J_i(\tilde{x},\tilde\beta))$. Applying the monotonicity of $B_i$, we obtain
    $$\| \tilde{x}-J_i(\tilde{x},\tilde{\beta})\|^2 \le \delta \| \tilde{x}-J_i(\tilde{x},\tilde{\beta})\|^2,$$
    implying that $\| \tilde{x}-J_i(\tilde{x},\tilde{\beta})\|\le 0$. Thus, $\tilde{x}=J_i(\tilde{x},\tilde{\beta})$ and hence, $\tilde{x}\in S_*^i$ for all $i\in \mathbb{I}$, thus $\tilde{x} \in S_*$ This prove the convergence of the whole sequence to a point of the set $S_*\cap X$.
\end{proof}

\section{Numerical Experiments}\label{examples}

 In this section, we compare numerically {\bf Algorithm 1} and {\bf Algorithm 2} through two examples. In one of the examples, we compare both algorithms with Algorithm 3.3 in \cite{van}. We use MATLAB version R2015B on a PC with Intel(R) Core(TM) i5-4570 CPU 3.20GHz and Windows 7 Enterprise, Service Pack 1. For the calculation of the projection steps we use the Quadratic Programming (quadprog) tool. In both examples, we consider the system of inclusion problems with the operators $B_i=N_C$ for $i=1,2,\cdots,m$, $C=\{x\in\RR^n: Ax\leq b\}$ where $A\in M^{l,n}(\RR)$ (a matrix with $l$ rows and $n$ columns with real entries) and $b\in \RR^l_+$ are computed randomly. In both cases, we take $l=20$, $\delta=0.1$, $\theta=0.5$, $\beta_k=1$ for all $k\in\NN$ and the initial point is $x^0=(1,1,\cdots,1)^T\in \RR^n$. The tolerance is taken as $\|x^k-x^*\|\leq 0.001$, with $x^*$ being the solution. Note that in both examples the problem turns into a system of variational inequalities, then we can choose $X=C$, no  need to be bounded. In {\bf Table 1} and {\bf Table 2} we denote the number of iterations by {\it iter} and the number of evaluation of the operators by {\it nT}.

\begin{example}\cite{harp,van}\label{exa1}
For all $i=1,2,\cdots, m$ consider the operator $T_i=M_i x$, with
\begin{equation*}
M_i=Q_i ^{T}Q_i,
\end{equation*}
where $Q$ is an $n\times n$ random matrix, then the matrix $M_i$ is positive semi-definite, hence the operator $T_i$ is maximal monotone. We compare our two algorithms with Algorithm 3.3 in \cite{van}. We took the same initial data used in \cite{van}. See results in {\bf Table 1}.

\begin{tabular}{|p{1cm} p{1cm}| p{1.8cm} p{1.8cm}| p{1.8cm} p{1.8cm}| p{1.8cm} p{1.6cm}|}
 \hline
 \multicolumn{8}{|c|}{{\bf Table 1} Results for Example \ref{exa1}.}\\
 \hline
  & &  {\bf Algor 1}  & & {\bf Algor 2} & & {\bf Algor 3.3} &  in\cite{van}\\
 \hline
 $n$ &$m$ &  iter(nT)   & CPU time  &  iter(nT) & CPU time & iter(nT)   & CPU time \\
 \hline
 2& 10 & 7(132) & 0.561604 & 30(53) & 0.234001 & 83(853) & 4.14963 \\
 5& 10 & 22(774) & 1.40401 & 100(387) & 0.670804 &  150(1887) & 10.7797\\
 10& 10 &46(2377)& 2.04361 & 110(576)& 0.702004 &  584(6416) & 190.181\\
 2& 20 & 6(224) & 0.780005 & 40(73) & 0.234002 &  57(1213) & 4.50843 \\
 5& 20 & 19(1300) & 1.76281 & 100(401) & 0.686404 &  232(5041) & 65.692\\
 10 & 20 & 29(3039) & 2.38682 & 120(640)& 0.811205 &  261(5860) & 88.7802 \\
 20 & 30 & 33(7308) & 4.49283 & 150(1111)& 1.21681 &  --(--) & $>10^4$ \\
 30 & 30 & 67(18032) & 9.39126 & 210(1741)& 1.85641 & --(--) & $>10^4$ \\
 50 & 30 & 87(27381) & 14.7265 & 420(4408)& 4.58643 & --(--) & $>10^4$ \\
 \hline
\end{tabular}
\end{example}

\begin{example}\label{exa2}
Consider the operators $A_i=M_i x+f(x)$, where $M_i$ is obtained as in {\bf Example \ref{exa1}} and $f:\RR^n\rightarrow \RR^n$ is defined for each $x=(x_1,x_2,\cdots,x_n)$ by
\begin{equation*}
f(x)=(x_1^3,x_2^3,\cdots,x_n^3).
\end{equation*}
So, the operators $A_i$ are maximal monotone and continuous but non-Lipschitz continuous. We don't compare our algorithms with others algorithms in the literature. As well as we know, there not algorithms for systems of variational inequalities where the operators being non-Lipschitz. See results in {\bf Table 2}.

\begin{tabular}{|p{1cm} p{1cm}| p{2.5cm} p{2.8cm}| p{2.5cm} p{2.8cm}|}
 \hline
 \multicolumn{6}{|c|}{{\bf Table 2} Results for Example \ref{exa2}.}\\
 \hline
  & &  {\bf Algorithm 1}  & & {\bf Algorithm 2} &\\
 \hline
 $n$ &$m$ &  iter(nT)   & CPU time  &  iter(nT) & CPU time  \\
 \hline
 5& 10 & 21(913) & 1.09201 & 80(375) & 0.499203 \\
 20& 10 & 71(5941) & 3.57242 & 260(2183) & 2.02801 \\
 50& 10 &  127(14128) & 7.84685 & 450(5023) & 4.38363 \\
 5& 20 & 15(1286) & 1.35721 & 140(716) & 0.982806 \\
 20& 20 & 39(6531) & 3.88442 & 160(1327) & 1.29481 \\
 50& 20 & 63(14676) & 7.47245 & 380(4282)& 4.04043 \\
 \hline
\end{tabular}

\end{example}
\begin{remark}
The numerical results confirm that ours proposed algorithms have a competitive behavior respect to similar methods, such as Algorithm 3.3 in \cite{van}.  Note that Algorithm 3.3 in \cite{van} have the better results, compared with others algorithm proposed in \cite{van}, as can be seen in its numerical experiments.  The advantage of {\bf Algorithm 1} and {\bf Algorithm 2} over Algorithm 3.3 lies in the difference between the number of iterations and the CPU time.
\end{remark}

\section{Conclusions}
We present two algorithms for solving systems of inclusion problems for the sum of two maximal monotone operators in Euclidean spaces with finite dimension. Both algorithms are variants of the forward-backward splitting method. One of them is also a hybrid with the alternating projection method. The algorithms contain two steps, a line-search, and a projection onto the separating hyperplane. The convergence analysis of both algorithms is established assuming maximal monotonicity of all the operators without the hypothesis of Lipschitz continuity. The numerical experiments show a better performance for our algorithms when compared with similar ones in the literature. The analysis of the complexity and the development of these algorithms for Banach spaces is a topic for future research.

\section*{Acknowledgments}
The author was partially supported by CNPq grant 200427/2015-6. This
work was concluded while the author was visiting the School of Information Technology and Mathematical Sciences at the  University of South
Australia. The author would like to thank the great hospitality received during his visit, particularly to Regina S. Burachik and  C. Yal\c{c}in Kaya. The author would like to express his gratitude to two anonymous referees for their valuable comments and suggestions that are very helpful to improve this paper.
\bigskip

\bibliographystyle{plain}

\end{document}